\newcommand{\N}{\mathbb{N}}
\newcommand{\Z}{\mathbb{Z}}
\newcommand{\R}{\mathbb{R}}
\newcommand{\A}{\mathbb{A}}
\newcommand{\s}{\mathbb{S}}
\newcommand{\tore}{\mathbb{T}}
\newcommand{\T}[1]{\tore^{#1}}
\newcommand{\fonc}[3]{#1:#2\to#3}
\DeclareMathOperator{\ord}{ord}
\DeclareMathOperator{\inv}{Inv}
\DeclareMathOperator{\leb}{Leb}
\DeclareMathOperator{\fix}{Fix}
\DeclareMathOperator{\lrho}{\rho_{loc,z}}
\DeclareMathOperator{\homeo}{Homeo}
\DeclareMathOperator{\diff}{Diff}
\newcommand{\homeog}[3]{\homeo^{#1}_{#2}\left(#3\right)}
\theoremstyle{theorem}
\newtheorem{lema}{Lemma}[section]
\newtheorem{coro}[lema]{Corollary}
\newtheorem{prop}[lema]{Proposition}
\newtheorem*{theoa}{Theorem A}
\newtheorem*{theob}{Theorem B}
\newtheorem*{theoc}{Theorem C}
\newtheorem*{theod}{Theorem D}
\newtheorem*{theoe}{Theorem E}
\theoremstyle{remark}
\title{On periodic groups of homeomorphisms of the 2-dimensional sphere}
\author{\textsc{Jonathan Conejeros}}
\date{}
\begin{document}
\maketitle

\begin{abstract}
We prove that every finitely-generated group of homeomorphisms of the 2-dimensional sphere all of whose elements have a finite order
which is a power of 2 and so that there exists a uniform bound for the order of group elements is finite. We prove a similar result for groups of
area-preserving homeomorphisms without the hypothesis that the orders of group elements are powers of 2 provided there is an element of even order.

\end{abstract}

\section{Introduction}

Despite some remaining open questions, there is a very complete understanding of group actions on 1-manifolds (see \cite{ghys} and \cite{navas}).
However, when passing to the 2-dimensional setting,
many natural and fundamental questions remain unsolved. One of the most striking ones is related to the Burnside problem.

Recall that Burnside (see \cite{bu}) proved that every finitely-generated linear group all of whose elements have finite order and such that there exists a uniform bound for the order of group elements is actually finite. This result has been extended to some
other contexts, but fails in general, as it is shown by classical examples due to Golod (see \cite{golod}). Later, Ol'shanskii (see \cite{ol}), Ivanov (see \cite{iv}), and Lysenok (see \cite{ly}) exhibited many other examples of infinite, finite-generated groups all of whose elements have a finite order which is bounded by a uniform constant. The case of groups of homeomorphisms is particularly interesting. The following question seems to be folklore: Does there exists an infinite, finitely-generated group of homeomorphisms of the 2-dimensional sphere all of whose elements have finite order?
Some progress on this question has been made by Guelman and Liousse (provided there is a finite orbit for the action and -in some cases- that all maps
involved are of class $C^1$), and Hurtado (provided the action is by $C^{\infty}$ volume-preserving diffeomorphisms and there is a uniform bound for the order of group elements).
The main result of this paper yields a new positive result for actions by homeomorphisms under some hypothesis on the orders of group elements. For short, in what follows, we will call {\em periodic} a group in which all elements have finite order, we will say that such a group is a {\em 2-group} if the orders of group elements are powers of 2. Also, we will say
that a periodic group has {\em uniformly bounded order} if there exists a uniform bound for the order of group elements.

\begin{theoa}
  Let $G$ be a finitely-generated $2$-group of homeomorphisms of the $2$-dimensional sphere. Suppose that
  $G$ has uniformly bounded order. Then $G$ is finite.
\end{theoa}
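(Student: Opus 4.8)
The plan is to argue by induction on the integer $n\ge 0$ with $2^{n}$ the uniform bound on the orders of elements of $G$, after two preliminary reductions. First, I would replace $G$ by its orientation-preserving subgroup $G^{+}=G\cap\homeo_{+}(\s^{2})$, of index at most $2$, which is finite if and only if $G$ is; so one may assume $G\subseteq\homeo_{+}(\s^{2})$. Second, I would invoke Kerékjártó's theorem that every finite subgroup of $\homeo_{+}(\s^{2})$ is topologically conjugate to a finite subgroup of $\mathrm{SO}(3)$: since the finite $2$-subgroups of $\mathrm{SO}(3)$ are cyclic, dihedral or Klein four-groups, those of exponent dividing $2^{n}$ have order at most $2^{n+1}$, so there is a bound $N=N(n)$ with $\#F\le N$ for every finite $F\le G$; in particular each non-trivial element of $G$ is conjugate to a rotation and has exactly two fixed points. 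The base case $n\le 1$ is immediate, since a finitely-generated group of exponent $2$ is abelian, hence finite.

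For the inductive step I would split into two cases according to whether $G$ has a finite orbit. \textbf{Case with a finite orbit.} If $\mathcal{O}\subseteq\s^{2}$ is a finite $G$-orbit and $p\in\mathcal{O}$, then $H:=\mathrm{Stab}_{G}(p)$ has finite index, is again a finitely-generated $2$-group of uniformly bounded order, and fixes $p$; it suffices to show $H$ is finite. I would blow up $p$: the open disk $U:=\s^{2}\setminus\{p\}$ is $H$-invariant, and on its prime-end (Carathéodory) compactification $\overline{U}$ — a closed disk with boundary circle $\partial U\cong\s^{1}$ — the $H$-action extends, still periodically, yielding a homomorphism $\rho\colon H\to\homeo_{+}(\s^{1})$ (orientation is preserved, as $H$ fixes $p$). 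The image $\rho(H)$ is a finitely-generated periodic group of circle homeomorphisms, hence finite: if it has a finite orbit this reduces, via $\s^{1}\setminus\{\mathrm{pt}\}\cong\R$, to the trivial group, while a minimal action of a torsion group preserves a probability measure and so is conjugate into $\mathrm{SO}(2)$. Finally $\ker\rho$ consists of periodic homeomorphisms of $\overline{U}$ fixing $\partial U$ pointwise, and by Kerékjártó's theorem for the disk every such map is conjugate to an orthogonal transformation fixing the boundary circle, i.e. to the identity; so $\ker\rho=\{e\}$ and $H\cong\rho(H)$ is finite, whence $G$ is finite.

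\textbf{Case without a finite orbit.} Here I must rule out that $G$ is infinite; this is where the $2$-group hypothesis is used essentially (it plays the role of area-preservation in the companion theorems). Since $G$ has no finite orbit and is a $2$-group, it has infinitely many involutions — otherwise $\bigcup\{\fix(t):t^{2}=e\}$ would be a non-empty finite invariant set. Any two involutions generate a dihedral subgroup, which by periodicity is finite of order $\le N$ and, by Kerékjártó, conjugate into $\mathrm{SO}(3)$; this rigidly constrains the fixed-point pairs $\fix(g)$. I would then consider a minimal set $M$ of the action: if $M$ is finite we are done; otherwise, analysing the induced action on the circle of prime ends of a complementary component of $\s^{2}\setminus M$, together with the dihedral rigidity of the involutions, I would extract a finite $G$-invariant set, contradicting the hypothesis. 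The main obstacle is precisely this last case: controlling the dynamics of an infinite periodic group acting on $\s^{2}$ without a finite orbit, and combining the prime-end/rotation-number machinery with the combinatorics of the abundant involutions; once this is in place the remaining reductions are essentially formal.
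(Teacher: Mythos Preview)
Your argument has two genuine gaps.

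In the finite-orbit case, the prime-end step fails at the outset. The domain $U=\s^{2}\setminus\{p\}$ has a single boundary point, so its Carath\'eodory compactification just restores $p$; it is not a closed disk with a boundary circle (indeed $U\cong\C$ is parabolic, so no Riemann map from $\D$ exists and prime-end theory does not apply). If instead you intend the real blow-up at $p$, replacing $p$ by the circle of directions, then a mere homeomorphism fixing $p$ has no reason to extend continuously to that circle, and there is no canonical group homomorphism $\rho\colon H\to\homeo_{+}(\s^{1})$. Notice that your argument here never uses the $2$-group hypothesis, so if it were correct it would already prove that every finitely-generated periodic subgroup of $\homeo_{+}(\s^{2})$ with a global fixed point is finite. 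The paper proves exactly this statement (Theorem~B) only for $2$-groups, and the $2$-group structure is used essentially: the local rotation number $\lrho$ around $z$ is \emph{not} a homomorphism unless the group is already abelian (Proposition~\ref{local rotation map homomorphism}), and the paper combines this with the uniqueness of the involution (Proposition~\ref{propuniqueelementofordertwo}) and the key implication ``$[f,g^{2}]=Id\Rightarrow[f,g]=Id$'' (Proposition~\ref{propocle}) to climb down the powers of $2$ and force commutativity.

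Second, you explicitly leave the no-finite-orbit case as ``the main obstacle'' without carrying it out; the sketch with minimal sets and prime ends of complementary components is not a proof. The paper avoids this dichotomy entirely. After establishing Theorems~B and~C (global fixed point, respectively orbit of size $2$), it shows that $Z(\nu)\cap\inv(G)$ is finite for each involution $\nu$ and then that $\inv(G)$ itself is finite (Proposition~\ref{prop involutions finite}); this is the only place the uniform bound on orders enters. The union $F=\bigcup_{\sigma\in\inv(G)}\fix(\sigma)$ is then a nonempty finite $G$-invariant set, so a finite orbit is \emph{manufactured} rather than assumed, and one concludes via Theorem~C when $|F|=2$ or via the Guelman--Liousse lemma when $|F|\ge 3$. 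Your induction on $n$ is set up but never invoked, and your bound $N=N(n)$ on finite subgroups is correct but plays no role in either of your two cases.
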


We note that the composition of two orientation-reversing homeomorphisms is orientation preserving. We deduce that the subgroup of orientation-preserving homeomorphisms has at most index 2 in the group $G$ above. Moreover, Schreier's lemma states that any finite-index subgroup in a finitely-generated group is finitely generated. Hence, in order to prove Theorem A, it is enough to show that a finitely-generated $2$-group of {\em orientation-preserving} homeomorphisms of the $2$-dimensional sphere is finite provided there is a uniform bound for the orders of the group elements. As a first step to prove this, we will show the next result which is interesting by itself.

\begin{theob}
  Let $G$ be a finitely-generated $2$-group of orientation-preserving homeomorphisms of the $2$-dimensional sphere. Suppose that $G$ acts with a global fixed point. Then $G$ is finite and cyclic. 
\end{theob}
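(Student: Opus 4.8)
The plan is to exploit the global fixed point to pass to a local model near that point, and to reduce the problem to the one-dimensional setting where the Burnside-type phenomenon is understood. Let $z\in\s^2$ be the global fixed point of $G$. Since every $g\in G$ is an orientation-preserving homeomorphism fixing $z$, one obtains an action of $G$ on the space of ``directions at $z$'', which after Brouwer-type normalisations can be realised as an action on a circle: concretely, blow up $z$ (pass to the prime-end compactification of $\s^2\setminus\{z\}$, or simply use that an orientation-preserving homeomorphism of $\R^2$ fixing the origin induces, via its action on the circle at infinity or on small loops around $0$, a well-defined element of $\homeo_+(\s^1)$). This gives a homomorphism $\rho\colon G\to\homeo_+(\s^1)$.

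The first key step is to control the image $\rho(G)$. Because $G$ is a finitely-generated periodic group of uniformly bounded order, so is $\rho(G)$; but a periodic subgroup of $\homeo_+(\s^1)$ all of whose elements have finite order is conjugate (by the classical Hölder/Margulis alternative for circle actions, or directly since a finite-order orientation-preserving circle homeomorphism has rotation number in $\Q$ and is conjugate to a rotation) to a group of rotations, hence is abelian; together with finite generation and bounded order this forces $\rho(G)$ to be finite and \emph{cyclic}. The second key step is to handle the kernel $K=\ker\rho$: an element of $K$ is an orientation-preserving homeomorphism of $\s^2$ fixing $z$ and acting trivially on directions at $z$, which one should push to a statement about the dynamics near $z$ — the aim is to show $K$ is trivial, using that a nontrivial finite-order ($2$-power order) orientation-preserving homeomorphism of $\s^2$ fixing $z$ cannot act trivially near $z$ (a periodic homeomorphism germ at a fixed point of the plane that is trivial to first order must be the identity, e.g. by Kerékjártó's theorem that a periodic orientation-preserving planar homeomorphism is conjugate to a rotation). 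Once $K=1$, $G\cong\rho(G)$ is finite cyclic.

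I would carry out the steps in this order: (1) construct the circle action $\rho$ from the fixed point; (2) prove that any finitely-generated periodic uniformly-bounded subgroup of $\homeo_+(\s^1)$ is finite cyclic (Hölder-type argument via rotation numbers); (3) prove injectivity of $\rho$ via the Kerékjártó normal form for periodic planar homeomorphisms; (4) assemble $G\cong\rho(G)$ to conclude.

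The main obstacle I expect is step (3): making rigorous the passage from ``$g$ fixes $z$ and acts trivially on directions at $z$'' to ``$g=\mathrm{id}$''. One must be careful that ``acting trivially on directions'' only controls an infinitesimal/prime-end invariant, not the germ; the clean way is to invoke Kerékjártó's theorem, which says any periodic orientation-preserving homeomorphism of $\s^2$ is topologically conjugate to a rotation, so its fixed-point set is either two points or all of $\s^2$, and its behaviour near a fixed point determines it globally. A secondary subtlety is that the full $G$ might not visibly act by periodic homeomorphisms in a single conjugacy class, so one should apply Kerékjártó element-by-element to deduce $K=1$, rather than trying to simultaneously linearise $G$.
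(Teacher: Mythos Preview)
There is a genuine gap in step~(1): for $C^0$ homeomorphisms there is no homomorphism $\rho\colon G\to\homeo_+(\s^1)$ of the kind you describe. A ``space of directions at $z$'' requires a derivative; the prime-end compactification of $\s^2\setminus\{z\}$ adds back a single point (its boundary is $\{z\}$), not a circle; and a general $g$ preserves no fixed family of small loops around $z$. What \emph{does} exist is the local rotation number $\lrho(g)\in\T{1}$, defined element by element via Ker\'ekj\'art\'o (each periodic $g$ is conjugate to a rotation, and the angle is a conjugacy invariant). Your step~(3) is then exactly the easy fact that $\lrho$ is injective on periodic elements. But $\lrho$ is \emph{not} a group homomorphism in general: Proposition~\ref{local rotation map homomorphism} in the paper shows that $\lrho$ is a homomorphism on a periodic subgroup $G_0$ \emph{if and only if} $G_0$ is abelian. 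So the scheme is circular---to get the homomorphism you already need $G$ abelian, which is the conclusion you are after. (Incidentally, Theorem~B does not assume uniformly bounded order, which you invoke in step~(2).)

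The paper's proof is designed precisely to get around this. It uses $\lrho$ only as a conjugacy invariant, not as a homomorphism, to show that any element conjugate in $G_0$ to its inverse has order at most $2$ (Lemma~\ref{lemagorder2}); from this, any two involutions commute, so there is at most one involution $\sigma$, and it is central (Proposition~\ref{propuniqueelementofordertwo}, Corollary~\ref{sigmaincenterof G}). The key step is then Proposition~\ref{propocle}: if $f$ commutes with $g^2$, then $f$ commutes with $g$. Since in a $2$-group some power $g^{2^p}$ is the central involution, one descends inductively to conclude that $f$ and $g$ commute for all $f,g$. Only \emph{after} $G$ is shown to be abelian does $\lrho$ become a genuine (injective) homomorphism, giving finiteness and cyclicity via Lemma~\ref{lemagruposabelianos2}. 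The $2$-group hypothesis is used essentially in this descent, not merely to bound orders.
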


The second step in the proof is the following.
\begin{theoc}
Let $G$ be a finitely-generated $2$-group of orientation-preserving homeomorphisms of the $2$-dimensional sphere.
Suppose that $G$ has a finite orbit of cardinality $2$.  Then $G$ is finite. More precisely, it is either a cyclic or a dihedral group.
\end{theoc}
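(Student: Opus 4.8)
The plan is to use Theorem B to reduce the statement to a classification of a finite group, and then to identify which $2$-groups can occur. Write $\{p,q\}$ for the orbit of cardinality $2$ and set $G_{0}=\mathrm{Stab}_{G}(p)$. The action of $G$ on the two-element set $\{p,q\}$ gives a homomorphism $G\to\mathrm{Sym}\{p,q\}\cong\Z/2\Z$ whose kernel is exactly $G_{0}$: an element fixing $p$ must fix the only remaining point $q$ of the orbit as well. This kernel is proper since the orbit really has two points, so $G_{0}$ is normal of index $2$. By Schreier's lemma $G_{0}$ is finitely generated; it is a $2$-group; and it acts by orientation-preserving homeomorphisms of $S^{2}$ with the global fixed point $p$. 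Theorem B then yields that $G_{0}$ is finite and cyclic, say $G_{0}=\langle h\rangle$ with $|G_{0}|=2^{k}$. Hence $G$ is finite of order $2^{k+1}$ and contains the cyclic subgroup $G_{0}$ of index $2$.

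It remains to determine the isomorphism type of $G$; note that if $G$ itself has a global fixed point then Theorem B applied to $G$ already gives that $G$ is cyclic. In general I would invoke the classical structure theorem for finite $2$-groups that possess a cyclic subgroup of index $2$: up to isomorphism such a group is $\Z/2^{k+1}\Z$, $\Z/2^{k}\Z\times\Z/2\Z$, the dihedral group, the generalized quaternion group, the semidihedral group, or the modular maximal-cyclic group of order $2^{k+1}$. Among these, each type that is neither cyclic nor dihedral contains a subgroup isomorphic to $\Z/4\Z\times\Z/2\Z$ or to the quaternion group $Q_{8}$; so it is enough to show that neither $\Z/4\Z\times\Z/2\Z$ nor $Q_{8}$ can act effectively on $S^{2}$ by orientation-preserving homeomorphisms.

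For this I would use two classical facts about $S^{2}$: an orientation-preserving involution has a fixed-point set of exactly two points, and every finite-order orientation-preserving homeomorphism has a fixed point (Lefschetz, since the Lefschetz number is $2$). If $Q_{8}=\langle i,j\rangle$ acted effectively, its unique involution $z=i^{2}=j^{2}$ would be central, so every element of $Q_{8}$ would preserve the two-point set $\mathrm{Fix}(z)$; since $\emptyset\neq\mathrm{Fix}(i)\subseteq\mathrm{Fix}(i^{2})=\mathrm{Fix}(z)$ and $i$ cannot interchange the two points of $\mathrm{Fix}(z)$ without having empty fixed set, one gets $\mathrm{Fix}(i)=\mathrm{Fix}(z)=\mathrm{Fix}(j)$, so $Q_{8}$ would have a global fixed point and hence be cyclic by Theorem B, a contradiction. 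For $\Z/4\Z\times\Z/2\Z=\langle w\rangle\times\langle\sigma\rangle$ one argues similarly that $\mathrm{Fix}(w)=\mathrm{Fix}(w^{2})=\{a,b\}$; if $\sigma$ fixes both $a$ and $b$ the group has a global fixed point, forcing it cyclic by Theorem B; if $\sigma$ interchanges $a$ and $b$, then the order-$4$ element $w\sigma$ satisfies $\mathrm{Fix}(w\sigma)\subseteq\mathrm{Fix}(w^{2})=\{a,b\}$ but fixes neither point, contradicting Lefschetz. (An alternative for this whole step, once $G$ is known to be finite, is to quote Ker\'ekj\'art\'o's theorem that a finite group of homeomorphisms of $S^{2}$ is topologically conjugate to a subgroup of $O(3)$ --- of $SO(3)$ here, as $G$ is orientation-preserving --- and to read off that its $2$-subgroups are precisely the cyclic and dihedral ones.)

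The reduction to a finite group is immediate from Theorem B and Schreier's lemma; the main obstacle is the last step, namely upgrading ``finite'' to ``cyclic or dihedral'' by excluding the quaternion, semidihedral, modular, and $\Z/2^{k}\Z\times\Z/2\Z$ possibilities, which is exactly where the two-dimensional topology (the structure of involutions of $S^{2}$, the Lefschetz theorem, or Ker\'ekj\'art\'o linearization) enters.
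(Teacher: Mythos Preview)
Your argument is correct, but it takes a longer route than the paper. Both proofs begin identically: set $G_{0}=\mathrm{Stab}_G(p)$, note it has index $2$, apply Schreier and Theorem~B to conclude $G_{0}$ is finite cyclic, hence $G$ is finite. The divergence is in the identification step. You invoke the classification of $2$-groups with a cyclic subgroup of index $2$ (cyclic, $\Z/2^{k}\times\Z/2$, dihedral, generalized quaternion, semidihedral, modular) and then exclude the four unwanted types by showing neither $Q_{8}$ nor $\Z/4\times\Z/2$ can act faithfully on $\s^{2}$ by orientation-preserving homeomorphisms. The paper instead proves directly that \emph{every element of $G\setminus G_{0}$ is an involution}: such an element $g$ swaps $p$ and $q$, so $g^{2}(p)=p$; by Ker\'ekj\'art\'o $g$ is conjugate to a rotation, and since the non-fixed point $p$ has period $2$, all non-fixed points do, so $g^{2}=Id$. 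Once this is known, picking $g\in G\setminus G_{0}$ and a generator $g_{0}$ of $G_{0}$, both $g$ and $gg_{0}$ lie in $G\setminus G_{0}$ and hence square to the identity, giving $gg_{0}g^{-1}=g_{0}^{-1}$, the dihedral relation.

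The paper's approach is shorter and avoids the group-theoretic classification entirely; your approach, while heavier, has the merit of isolating exactly which finite $2$-groups are obstructed from acting on $\s^{2}$ (namely $Q_{8}$ and $\Z/4\times\Z/2$), which is an interesting fact in its own right. Your exclusion arguments for these two groups are fine, and the Ker\'ekj\'art\'o-linearization alternative you mention at the end would also work.
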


As a by product of our methods, we obtain the following result for groups of area-preserving homeomorphisms.

\begin{theod}
  Let $G$ be a finitely-generated periodic group of area-preserving homeomorphisms of the $2$-dimensional sphere.
  Suppose that $G$ has uniformly bounded order and contains an element of even order. Then $G$ is finite.
\end{theod}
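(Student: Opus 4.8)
The plan is to reduce to an orientation-preserving group, use area-preservation to manufacture a finite orbit, and then finish with Theorem~B. Since the orientation-preserving subgroup $G^{+}$ has index at most $2$ in $G$, is finitely generated by Schreier's lemma, and is again area-preserving and periodic of uniformly bounded order, it suffices to prove that $G^{+}$ is finite. A suitable power of the given even-order element of $G$ is an involution, and by the theorem of Brouwer and Ker\'ekj\'art\'o every periodic homeomorphism of $S^{2}$ is topologically conjugate to an isometry; so that involution is conjugate to the rotation by $\pi$ (two fixed points), to a reflection (fixed circle), or to the antipodal map. I would carry out the argument in the principal case, where $G$ is orientation-preserving and contains an involution $\tau$ with $\fix(\tau)=\{p,q\}$ a two-point set, so that $S^{2}\setminus\{p,q\}$ is a $\tau$-invariant open annulus; the orientation-reversing configurations are treated by the same scheme, with $\{p,q\}$ replaced by the fixed circle, or by passing to the quotient $\R P^{2}$.

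The core of the proof is to produce a finite orbit of $G$, and this is precisely where the hypothesis of area-preservation must be used. Every $g\in G$ is an area-preserving orientation-preserving homeomorphism of $S^{2}$ and hence has a fixed point; more importantly, area-preservation rules out wandering behaviour of the local dynamics near fixed points, and I would use this to control how the two-point sets $\fix(g\tau g^{-1})=\{g(p),g(q)\}$, $g\in G$, can be distributed over the sphere. Combining this recurrence with the uniform order bound --- which limits the local rotation numbers at such fixed points to finitely many values --- and with the finite generation of $G$, the aim is to show that $\bigcup_{g\in G} g\cdot\{p,q\}$ is a finite set. That set is $G$-invariant, so $G$ has a finite orbit $O$.

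Given a finite orbit $O$, the pointwise stabilizer $K=\bigcap_{x\in O}\mathrm{Stab}_{G}(x)$ has finite index in $G$ (its index divides $|O|!$), is finitely generated by Schreier's lemma, and fixes each point of $O$; in particular $K$ has a global fixed point $z$. If $G$ is a $2$-group, Theorem~B applies directly to $K$ and gives that $K$ is finite. In general I would rerun the argument behind Theorem~B: the local rotation number at $z$ defines a homomorphism $K\to\Q/\Z$ whose image is a subgroup of bounded exponent, hence a finite cyclic group, and whose kernel is trivial, since an orientation-preserving periodic homeomorphism fixing $z$ with vanishing local rotation number there is, by Brouwer--Ker\'ekj\'art\'o, the identity. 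Either way $K$ is finite, and since $[G:K]<\infty$ we conclude that $G$ is finite. (One could instead first shrink $O$ to a two-point orbit and invoke Theorem~C, but the stabilizer argument makes that detour unnecessary.)

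I expect the middle step to be the main obstacle: extracting a finite orbit from area-preservation without already knowing that $G$ is finite. Brouwer--Ker\'ekj\'art\'o rigidity controls each element on its own, but converting this, together with the invariant area measure, into an honest bound on how the fixed-point pairs of the involutions $g\tau g^{-1}$ can spread out is the delicate point --- and it is exactly here that the two hypotheses peculiar to Theorem~D, area-preservation and the existence of an element of even order, are doing all of the work. A secondary technical point is verifying that the local-rotation-number argument underlying Theorem~B really does go through for an arbitrary periodic group of bounded exponent, and not only for $2$-groups.
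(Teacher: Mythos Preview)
Your overall architecture --- reduce to orientation-preserving, manufacture a finite invariant set, then analyze a stabilizer via a rotation-number homomorphism --- matches the paper's, but the two substantive steps are not actually carried out, and your diagnosis of where area-preservation enters is wrong.

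Start with your endgame. You propose to finish by showing that the local rotation number at a global fixed point $z$ of $K$ gives an injective homomorphism $K\to\Q/\Z$. This is precisely where Proposition~\ref{local rotation map homomorphism} bites: the local rotation number around a single fixed point is a group homomorphism \emph{if and only if} the group is already abelian, so the argument you sketch is circular, and your ``secondary technical point'' is in fact fatal as stated. The paper's way out is the crucial observation you are missing. If $K$ fixes \emph{two} points $z,z'$ (which it does, since your orbit $O$ has $|O|\geq 2$), then $K$ acts on the open annulus $\A_{z,z'}=\s^2\setminus\{z,z'\}$, and the \emph{mean} rotation number $\rho_{\A_{z,z'}}(g)=\int_{\A_{z,z'}}\rho_1(g,x)\,d\leb(x)$ is automatically a homomorphism for area-preserving maps, by the change-of-variables identity $\int\rho_1(f,g(x))\,d\leb(x)=\int\rho_1(f,y)\,d\leb(y)$ (Lemma~\ref{obvio}). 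This --- not ``ruling out wandering behaviour'' --- is exactly where and how area-preservation is used; once one has it, Proposition~\ref{rotation map homomorphism} forces $K$ to be abelian, hence finite cyclic. This is the content of Theorem~E.

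Now the finite orbit. The paper does not try to bound the $G$-orbit of a specific pair $\{p,q\}$ by any recurrence-plus-bounded-rotation heuristic. Instead, having Theorem~E in hand, it reruns the proof of Proposition~\ref{prop involutions finite} verbatim with Theorem~E replacing Theorem~C: any finitely generated group of involutions commuting with a fixed involution $\nu$ preserves the $2$-point set $\fix(\nu)$, so by Theorem~E it is cyclic or dihedral, and the uniform order bound forces $Z(\nu)\cap\inv(G)$ to be finite; then the elementary fact that a group generated by two involutions is cyclic or dihedral yields a central involution in each $\langle\sigma_1,\sigma_n\rangle$, and a pigeonhole argument gives $\inv(G)$ finite. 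The union $F=\bigcup_{\sigma\in\inv(G)}\fix(\sigma)$ is then a finite, nonempty, $G$-invariant set, and one concludes via Theorem~E if $|F|=2$ or the Guelman--Liousse lemma if $|F|\geq 3$. So the ``main obstacle'' you flag is resolved by combinatorics of involutions once Theorem~E is available, and Theorem~E is in turn exactly the rotation-number statement whose correct formulation (annulus mean rotation number, not local rotation number) you did not identify.
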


As above, in order to prove Theorem D, it is enough to show an analog of Theorem C in this setting.

\begin{theoe}
  Let $G$ be a finitely-generated periodic group of area-preserving homeomorphisms of the $2$-dimensional sphere. Suppose that $G$ has a finite orbit of cardinality $2$.
  Then $G$ is finite. More precisely, it is either a cyclic or a dihedral group.
\end{theoe}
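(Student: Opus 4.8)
The plan is to reduce to the index-two subgroup fixing the two points of the orbit, to prove that this subgroup is finite cyclic by means of a rotation-number homomorphism built from the invariant area, and then to read off the dihedral structure. Let $\{N,S\}$ be the orbit of cardinality $2$. The action of $G$ on this orbit gives a homomorphism $G\to\Z/2\Z$ which is onto, since some element swaps $N$ and $S$; let $G_0$ be its kernel. Then $[G:G_0]=2$, the group $G_0$ fixes $N$ and $S$, it is finitely generated by Schreier's lemma, and it is periodic, area-preserving, and hence orientation-preserving (area-preserving homeomorphisms preserve orientation). I shall prove that $G_0$ is finite cyclic and that every element of $G\setminus G_0$ has order $2$. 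Granting these, let $a$ generate $G_0$ and pick $\sigma\in G\setminus G_0$; since $\sigma^2=\mathrm{id}$ and $a\sigma\notin G_0$ gives $(a\sigma)^2=\mathrm{id}$, one gets $\sigma a\sigma^{-1}=a^{-1}$, so $G$ is dihedral of order $2\lvert G_0\rvert$ --- in the degenerate case $G_0=\{\mathrm{id}\}$ this reads $G\cong\Z/2\Z$, which is cyclic. This is the asserted conclusion.

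To see that $G_0$ is finite cyclic: $G_0$ acts on the open annulus $\A=\s^2\setminus\{N,S\}$, preserving orientation, fixing each of its two ends, and preserving a probability measure $\mu$ (the normalised area). Lift the action to the universal cover $\widetilde\A\cong\R^2$, whose deck group is generated by $T(x,t)=(x+1,t)$; each $g\in G_0$ admits lifts $\tilde g$ (commuting with $T$), and since $g$ has finite order $q$ we have $\tilde g^q=T^k$ for some integer $k$. The displacement $x\mapsto p_1(\tilde g x)-p_1(x)$, where $p_1$ denotes the first coordinate, is $T$-invariant, hence descends to $\A$, and setting $\bar\rho(g):=\int_\A\bigl(p_1\tilde g-p_1\bigr)\,d\mu\ \bmod 1$ one checks, using the cocycle identity for displacements together with the $G_0$-invariance of $\mu$, that $\bar\rho\colon G_0\to\R/\Z$ is a well-defined homomorphism with $\bar\rho(g)=k/q$. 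The crucial point is that $\bar\rho$ is injective. If $g\ne\mathrm{id}$, then by Ker\'ekj\'art\'o's theorem the finite cyclic group $\langle g\rangle$ is topologically conjugate to a group of rotations of $\s^2$, so $g$ is conjugate to a nontrivial rotation; hence $\fix(g)$ consists of exactly two points and so equals $\{N,S\}$, and $g$ fixes no point of $\A$. If $k=0$, then $\tilde g$ would be a finite-order orientation-preserving homeomorphism of $\R^2$, hence would have a fixed point by Brouwer's theorem, producing a fixed point of $g$ in $\A$ --- a contradiction. Thus $k\ne0$, so $\bar\rho(g)\ne0$, and $\bar\rho$ is injective. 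Consequently $G_0$ embeds in $\R/\Z$; being finitely generated and periodic, its image lies in the torsion subgroup $\Q/\Z$ and is therefore finite cyclic. Hence $G_0$ is finite cyclic, and $G$ is finite.

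For the remaining claim, use Ker\'ekj\'art\'o's theorem again to conjugate so that $G_0$ is the group of rotations of $\s^2$ about the axis through $N$ and $S$, with generator $a$. Fix $\sigma\in G\setminus G_0$; then $\sigma$, and likewise every $a^i\sigma$, swaps $N$ and $S$. Applying Ker\'ekj\'art\'o's theorem to the cyclic group $\langle a^i\sigma\rangle$, the element $a^i\sigma$ --- which is not the identity, as it lies outside $G_0$ --- is conjugate to a nontrivial rotation of some angle $\theta_i$. If $\theta_i\notin\{0,\pi\}$, then $(a^i\sigma)^2$ is conjugate to the nontrivial rotation of angle $2\theta_i$, so $\fix\bigl((a^i\sigma)^2\bigr)$ has exactly two points; but $(a^i\sigma)^2\in G_0$ fixes $N$ and $S$ while $a^i\sigma$ moves them, which forces that two-point set to equal $\{N,S\}$ and contradicts $a^i\sigma(N)\ne N$. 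Hence $\theta_i=\pi$, so $(a^i\sigma)^2=\mathrm{id}$ for every $i$; in particular every element of $G\setminus G_0$ has order $2$, and by the reduction above $G$ is dihedral, as required.

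The heart of the matter --- and the one place where the area-preserving hypothesis is used --- is the promotion of the rotation number to an honest homomorphism. For a general periodic group of orientation-preserving homeomorphisms of $\s^2$ fixing two points, the rotation number at an end is merely a quasimorphism, the group need not embed in $\R/\Z$, and one should not expect it to be finite, consistently with the Burnside-type question for $\s^2$ being open; it is exactly the invariant probability measure furnished by the area that rigidifies the displacement cocycle into a homomorphism, while the finite-order hypothesis --- through Brouwer's and Ker\'ekj\'art\'o's theorems --- is what makes that homomorphism injective.
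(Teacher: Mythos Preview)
Your argument follows the same route as the paper: pass to the index-$2$ subgroup $G_0$ fixing both orbit points, use the invariant area to make the annulus rotation number a genuine homomorphism $G_0\to\T{1}$ (this is the paper's Lemma~\ref{obvio}), conclude that $G_0$ is finite cyclic, and then extract the dihedral structure. Your injectivity argument via Brouwer on the plane is a clean way to justify the paper's terse ``if $\rho_{\A_{N,S}}(g)=0$ then $g=\mathrm{Id}$''.

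One correction: the claim that area-preserving homeomorphisms of $\s^2$ are automatically orientation-preserving is false --- any reflection preserves area but reverses orientation. The paper tacitly works with orientation-preserving maps (it places $G_0$ inside $\homeog{}{0}{\A_{z,z'}}$), consistently with the standing reduction announced before Theorem~D; you should either add this hypothesis explicitly or pass to the orientation-preserving subgroup first. Also, your argument that each $\sigma\in G\setminus G_0$ has order $2$, while correct, can be shortened as the paper does: by Ker\'ekj\'art\'o every non-fixed point of the periodic map $\sigma$ has period exactly $\ord(\sigma)$, and since $\sigma$ swaps $N$ and $S$, the point $N$ has $\sigma$-period $2$, whence $\ord(\sigma)=2$.
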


\noindent \textbf{Acknowledgments:} The preparation of this article was funded by the
FONDECYT postdoctoral grant N° 3170455 entitled ``Algunos problemas para grupos de homeomorfismos de superficies''. I thank A. Navas for several useful discussions, suggestions, and corrections.

%
%

\section{Preliminary results}

\subsection{Local rotation set}

Since local dynamics (more precisely, the dynamics around a fixed point) does not fit into a compact framework, we consider only rotation numbers of ``good orbits''. This means that, in order to get a definition of a rotation set which is invariant under conjugacy, we consider only recurrent points close to the fixed point.\\

Let $\overline{g}$ be a homeomorphism of the plane $\R^2$ that preserves the orientation and fixes the vector $\mathbf{0}:=(0,0)\in \R^2$. We will denote $\widetilde{\A}= \R\times (0,+\infty)$ the universal covering of $\R^2\setminus \{\mathbf{0}\}$. Let $\fonc{\widetilde{\pi}}{\widetilde{\A}}{\R^2\setminus\{\mathbf{0}\}}$ be the corresponding universal covering map, and $\fonc{p_1}{\R\times (0,+\infty)}{\R}$ the projection on the first coordinate. Let $\widetilde{\overline{g}}$ be a lift of $\overline{g}$ to $\widetilde{\A}$. Following \cite{leroux}, we say that the {\em rotation number (around $\mathbf{0}$)} of a $\overline{g}$-recurrent point $x\in \R^2\setminus \{\mathbf{0}\}$ under $\widetilde{\overline{g}}$ is well-defined and equal to $\rho_{\mathbf{0}}(\widetilde{\overline{g}},x)\in \R\cup \{-\infty\}\cup \{+\infty\}$ if for every sequence of integers $(n_k)_{k\in \N}$ which goes to $+\infty$ such that $ (\overline{g}^{n_k}(x))_{k\in \N}$ converges to $x$,  the sequence $(\rho_{n_k}(\widetilde{\overline{g}},x))_{k\in \N}$ defined as
$$ \rho_{n_k}(\widetilde{\overline{g}},x):=\frac{1}{n_k}(p_1(\widetilde{\overline{g}}^{n_k}(\widetilde{x}))-p_1(\widetilde{x})),$$
where $\widetilde{x}$ is a point in $\widetilde{\pi}^{-1}(x)$, converges to  $\rho_{\mathbf{0}}(\widetilde{\overline{g}},x)$. Notice that this definition does not depend on the choice of $\widetilde{x}\in \widetilde{\pi}^{-1}(x)$.

The \textit{local rotation set (around the fixed point $\mathbf{0}$) of $\widetilde{\overline{g}}$}, which we denote as $\rho_{\mathbf{0}}(\widetilde{\overline{g}})$,  is the set of all rotation numbers of recurrent points of $\overline{g}$.

We have the following properties (see \cite{leroux} for more details):
\begin{enumerate}
  \item The rotation number of a recurrent point and, consequently, the local rotation set, are invariant under (local) oriented topological conjugacy. More precisely, if $\varphi$ is a homeomorphism of $\R^2$ that preserves the orientation and fixes $\mathbf{0}\in \R^2$ and $\widetilde{\varphi}$ is a lift of $\varphi$ to $\widetilde{\A}$,
  then $$    \rho_{\mathbf{0}}(\widetilde{\varphi}^{-1} \widetilde{\overline{g}} \widetilde{\varphi}) =   \rho_{\mathbf{0}}(\widetilde{\overline{g}}).$$
    \item For every $p,q\in\Z$, we have  $  \rho_{\mathbf{0}}(\widetilde{\overline{g}}^q +(p,0))= q \rho_{\mathbf{0}}(\widetilde{\overline{g}})+p $. A similar formula holds for the rotation number of a recurrent point.
\end{enumerate}

\subsection{Periodic, orientation-preserving homeomorphisms of the 2-dimensional sphere}

We say that an orientation-preserving homeomorphism $g$ of the 2-dimensional sphere is {\em periodic} if its order is finite, that is, if there exists an integer $q$ such that $g^q=Id$. We recall that Ker\'ej\'art\'o proved that every periodic, orientation-preserving homeomorphism of the $2$-dimensional sphere is conjugate to a rotation (see \cite{kere}  and \cite{coko} ). Formally, we have the following proposition.

\begin{prop}\label{kerejartolemma}
  Let $g$ be a periodic, orientation-preserving homeomorphism of the 2-dimensional sphere. Then there
  exist an orientation-preserving homeomorphism $h$ of the 2-dimensional sphere and a rotation
  $R$ such that $hgh^{-1}=R$. In particular, if $g$ is nontrivial, then it has exactly two fixed points,
  and every point that is not fixed is periodic. Besides, the periods of all nonfixed, periodic points are the same.
\end{prop}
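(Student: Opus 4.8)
The statement has two layers: the conjugacy $hgh^{-1}=R$, which is exactly the theorem of Ker\'ej\'art\'o, and the ``In particular'' assertions, which are then an elementary matter about rotations of $S^2$. I would quote the conjugacy from \cite{kere} and \cite{coko}, but indicate its structure as follows. One proves first the analogue on the closed disc $\overline{\mathbb{D}}$, with $q$ the order of $g$: Brouwer's fixed point theorem yields a fixed point (which one checks may be taken interior, using that the restriction of $g$ to $\partial\mathbb{D}$ is a periodic orientation-preserving circle homeomorphism, hence topologically a rational rotation), so after a conjugacy one may assume $g(0)=0$ and that $g$ agrees on $\partial\mathbb{D}$ with the linear rotation $R_0$ of angle $2\pi k/q$. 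One then selects an arc from $0$ to $\partial\mathbb{D}$ whose $q$ successive iterates under $g$ are embedded and cut $\overline{\mathbb{D}}$ into $q$ closed ``sectors'' cyclically permuted by $g$; the Schoenflies theorem gives a homeomorphism of one sector onto the corresponding circular sector of the linear model matching the bounding arcs, and propagating it by the $g$-action — the equivariance forcing the pieces to agree along the arcs they share — produces a homeomorphism conjugating $g$ to $R_0$. The sphere case is deduced from the disc case: Lefschetz' fixed point formula (the Lefschetz number of $g$ equals $2$) provides one fixed point $p$, and on $S^2\setminus\{p\}\cong\R^2$ Brouwer's plane translation theorem shows a periodic orientation-preserving homeomorphism of the plane cannot be fixed-point free, giving a second fixed point; one then splits $S^2$ along a suitable $g$-invariant Jordan curve into two invariant closed discs, applies the disc statement to each, and glues along the common circle. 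Finally, $h$ may be taken orientation-preserving, precomposing if necessary with a reflection of $S^2$ commuting with $R$.

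The essential obstacle is precisely that $g$ is only a homeomorphism, so one cannot average a Riemannian metric to turn $g$ into an isometry: the linearization must be built by hand. The two delicate points are the compatibility of the locally defined conjugating homeomorphisms along the boundaries of the fundamental sectors, and, in the sphere case, the construction of the $g$-invariant Jordan curve separating the two fixed points.

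Granting the conjugacy, the remaining assertions follow at once. If $g$ is nontrivial then $R$ is a nontrivial rotation, so in suitable coordinates on $\R^3$ it is the rotation about the $z$-axis by an angle $\theta\notin 2\pi\Z$; its only fixed points on $S^2$ are the two poles, whence $\fix(g)=h^{-1}(\fix(R))$ consists of exactly two points. Any other point of $S^2$ lies on a latitude circle, which is $R$-invariant and on which $R$ acts by a rotation; since the order of $R$ equals the order $q$ of $g$, that circle rotation has order $q$, so every non-pole point of $S^2$ is periodic for $R$ of period exactly $q$. Transporting by $h$, every point of $S^2$ that is not fixed by $g$ is periodic, and all such points have the common period $q$.
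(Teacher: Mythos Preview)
Your proposal is correct. The paper does not actually prove this proposition: it is stated as a classical theorem of Ker\'ekj\'art\'o, with references to \cite{kere} and \cite{coko}, and no argument is supplied. Your sketch of the disc-then-sphere proof (essentially the Constantin--Kolev presentation) and your clean derivation of the ``In particular'' claims from the structure of a nontrivial rotation of $S^2$ are accurate and go well beyond what the paper provides; for the paper's purposes a bare citation would have sufficed.
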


%
%
%
%

\subsection{Local rotation set for periodic homeomorphisms}

In our setting, let $g$ be a periodic homeomorphism that preserves the orientation of $\s^2$ and fixes a point $z\in \s^2$. Considering a chart $\phi$ centered at $z$, we have that $\overline{g}= \phi g\phi^{-1}$ is a periodic homeomorphism of the plane $\R^2$ that preserves the orientation and fixes the vector $\phi(z)=\mathbf{0}\in \R^2$. Let $\widetilde{\overline{g}}$ be a lift of $\overline{g}$ to $\widetilde{\A}$ the universal covering to $\R^2\setminus \{\mathbf{0}\}$, which, as before, we identify to $\R\times (0,+\infty)$. Suppose that $\overline{g}$ has order $q$, that is, $q$ is the smaller positive integer such that $\overline{g}^q=Id$. We denote $\mathrm{ord}(g)$ = q.
Then there exists an integer $p$ such that, for every $\widetilde{x}\in \widetilde{\A}$, $$\widetilde{\overline{g}}^q(\widetilde{x})= \widetilde{x}+(p,0).$$ It is not hard to prove that $p$ and $q$ are coprime. Besides, every point is recurrent for $\widetilde{\overline{g}}$ and has a rotation number around $\mathbf{0}$ equal to $p/q$. Because of the invariance under conjugacy by $\rho_{\mathbf{0}}(\widetilde{\overline{g}})$ (Property (1) above), this number does not depend on the choice of the chart $\phi$. Hence, by Property (2) above, we can associate  to our periodic homeomorphism $g$ a unique ``local rotation number'' around $z$ defined as 
$$ \lrho(g)= \frac{p}{q} (\text{mod } 1) \in \T{1}.$$
Clearly, if $\lrho(g)=0$, then $g$ is the identity.

Given $z\in \s^2$, we will denote $\homeog{+}{}{\s^2;z}$ the group of all homeomorphisms of $\s^2$ that preserve the orientation and fix $z$.
By the discussion above, the ``local rotation number map'' is well defined for a periodic subgroup of $\homeog{+}{}{\s^2;z
}$. We have the following properties.

\begin{lema}\label{lemapropertiesrotationset}
Let $g$ be an element of $\homeog{+}{}{\s^2;z}$.
  \begin{enumerate}
    \item The local rotation set around $z$ is invariant under (local) oriented topological conjugacy. More precisely, if $\varphi$ belongs to $\homeog{+}{}{\s^2;z}$, then $$    \lrho( \varphi^{-1}g \varphi)=\lrho(g).$$
    \item For every $q\in\Z$, we have  $ \lrho(g^q)=q\lrho(g)$.
  \end{enumerate}
\end{lema}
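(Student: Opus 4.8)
The plan is to deduce both assertions directly from the analogous statements for the local rotation set on $\widetilde{\A}$ --- Properties (1) and (2) recalled above --- by conjugating through a chart $\phi$ centered at $z$ and lifting to $\widetilde{\A}$. Throughout, $g$ is taken to be periodic, so that $\lrho(g)$ is defined; in assertion (1) the homeomorphism $\varphi$ need not be periodic, but $\varphi^{-1}g\varphi$ is periodic of the same order as $g$, hence $\lrho(\varphi^{-1}g\varphi)$ is meaningful.

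For assertion (1), fix a chart $\phi$ centered at $z$ and set $\overline{g} = \phi g \phi^{-1}$ and $\overline{\varphi} = \phi\varphi\phi^{-1}$, which are orientation-preserving homeomorphisms of a neighborhood of $\mathbf{0}$ in $\R^2$ fixing $\mathbf{0}$; since the local rotation set depends only on the germ at $\mathbf{0}$, shrinking the domains (or, if one prefers, extending these maps to all of $\R^2$) is harmless. From $\phi(\varphi^{-1}g\varphi)\phi^{-1} = \overline{\varphi}^{-1}\overline{g}\overline{\varphi}$ and a choice of lifts $\widetilde{\overline{g}}$ and $\widetilde{\overline{\varphi}}$ to $\widetilde{\A}$, the map $\widetilde{\overline{\varphi}}^{-1}\widetilde{\overline{g}}\widetilde{\overline{\varphi}}$ is a lift of $\overline{\varphi}^{-1}\overline{g}\overline{\varphi}$, so Property (1) of the local rotation set gives $\rho_{\mathbf{0}}(\widetilde{\overline{\varphi}}^{-1}\widetilde{\overline{g}}\widetilde{\overline{\varphi}}) = \rho_{\mathbf{0}}(\widetilde{\overline{g}})$. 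As $g$ is periodic, each side is a single rational of the form $p/\mathrm{ord}(g)$, and reducing modulo $1$ in $\T{1}$ yields $\lrho(\varphi^{-1}g\varphi) = \lrho(g)$; independence from the chart and the lift was already recorded in the discussion preceding the lemma.

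For assertion (2), fix again a chart $\phi$ centered at $z$, put $\overline{g} = \phi g \phi^{-1}$, and choose a lift $\widetilde{\overline{g}}$ to $\widetilde{\A}$. For any $q\in\Z$ the map $\widetilde{\overline{g}}^{q}$ is a lift of $\overline{g}^{q} = \phi g^{q}\phi^{-1}$, so Property (2) applied with $p=0$ gives $\rho_{\mathbf{0}}(\widetilde{\overline{g}}^{q}) = q\,\rho_{\mathbf{0}}(\widetilde{\overline{g}})$; both sides being single rationals, passing to $\T{1}$ gives $\lrho(g^{q}) = q\,\lrho(g)$ (consistently, when $q$ is a multiple of $\mathrm{ord}(g)$ both sides vanish in $\T{1}$ and $g^{q}=Id$). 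The whole argument is essentially bookkeeping: the only points deserving a word are that conjugation and taking powers commute with the chart conjugacy and with lifting --- which is immediate --- and that the chart is merely local, which is irrelevant since the local rotation set depends only on the germ at the fixed point. I do not expect any genuine obstacle here.
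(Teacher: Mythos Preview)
Your proof is correct and matches the paper's approach: the paper does not actually supply a proof of this lemma, treating it as an immediate consequence of Properties~(1) and~(2) of the local rotation set stated just before, which is precisely what you spell out by passing through a chart $\phi$ and a lift to $\widetilde{\A}$.
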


The first nontrivial observation concerning the local rotation set is the following.

\begin{prop}\label{local rotation map homomorphism}
  Let $G_0$ be a periodic subgroup of $\homeog{+}{}{\s^2;z}$. The local rotation number map defined on $G_0$ is a group homomorphism into $\T{1}$ if and only if $G_0$ is abelian.
\end{prop}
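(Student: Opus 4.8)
The plan is to prove both implications. The "only if" direction is essentially a formal consequence of Lemma~\ref{lemapropertiesrotationset}(2) together with the fact that $\T{1}$ is abelian: if $\lrho\colon G_0\to\T{1}$ is a group homomorphism, then for any $g,h\in G_0$ we have $\lrho([g,h])=\lrho(g)+\lrho(h)-\lrho(g)-\lrho(h)=0$, so every commutator has trivial local rotation number; since $\lrho(f)=0$ forces $f=Id$ (as noted right after the definition of $\lrho$), we conclude $[g,h]=Id$, i.e. $G_0$ is abelian. This is the easy half.

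For the "if" direction, suppose $G_0$ is abelian. I want to show $\lrho(gh)=\lrho(g)+\lrho(h)$ for all $g,h\in G_0$. The first step is to use Proposition~\ref{kerejartolemma} (Ker\'ej\'art\'o): every nontrivial element of $G_0$ is conjugate to a rotation, hence has exactly two fixed points, and the whole group fixes $z$. The key geometric input will be that a nontrivial element $g$ with $\lrho(g)=p/q$ behaves near $z$ like the rotation by $p/q$; in particular one can choose a single chart in which two commuting periodic homeomorphisms are simultaneously "nice" near $z$. The cleanest route: since $G_0$ is abelian, consider the subgroup generated by $g$ and $h$; being a finitely generated abelian periodic group it is finite, and finite groups of orientation-preserving homeomorphisms of $\s^2$ fixing a point are conjugate (by a single homeomorphism) to groups of rotations — this is a classical smoothing/linearization statement for finite actions, and one can also derive it by realizing the finite abelian group $\langle g,h\rangle$ inside $\homeog{+}{}{\s^2;z}$ and averaging a metric, or by directly invoking the structure of finite rotation subgroups. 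Once $g$ and $h$ are simultaneously conjugated to rotations $R_\alpha,R_\beta$ about the same axis through $z$, the local rotation number of $R_\gamma$ is visibly $\gamma$, additivity $\lrho(R_\alpha R_\beta)=\alpha+\beta=\lrho(R_\alpha)+\lrho(R_\beta)$ is immediate, and by conjugacy invariance (Lemma~\ref{lemapropertiesrotationset}(1)) the same holds for $g,h$.

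Concretely the steps are: (i) reduce to showing additivity on each pair $g,h$; (ii) observe $H:=\langle g,h\rangle$ is a finitely generated abelian periodic subgroup of $\homeog{+}{}{\s^2;z}$, hence finite; (iii) invoke (or prove via a fixed-point/averaging argument, or via Ker\'ej\'art\'o applied to a generator together with commutativity to propagate the conjugacy) that $H$ is conjugate inside $\homeog{+}{}{\s^2;z}$ to a finite group of rotations about the axis through $z$; (iv) compute $\lrho$ directly for rotations and use conjugacy invariance to transfer back. The main obstacle is step (iii): knowing each single element is conjugate to a rotation (Ker\'ej\'art\'o) does not immediately give a \emph{common} conjugacy for the whole abelian group, so one must either quote the Ker\'ej\'art\'o-type theorem for finite group actions on $\s^2$ (every finite orientation-preserving action on $\s^2$ is topologically conjugate to an action by isometries of the round sphere) or argue more hands-on that, having fixed $z$, the action of the finite abelian group near $z$ can be simultaneously linearized — this linearization near the fixed point is really all that is needed since $\lrho$ is a local invariant. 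I expect the proof to handle this by fixing the chart $\phi$ centered at $z$ from the earlier subsection, showing that in this chart the lifts $\widetilde{\overline{g}}$, $\widetilde{\overline{h}}$ commute up to integer translations, and reading off the rotation numbers $p_g/q_g$, $p_h/q_h$ and that of the product from the single relation $\widetilde{\overline{gh}}=\widetilde{\overline{g}}\,\widetilde{\overline{h}}$ composed appropriately — so that additivity follows from the additivity of translation components, exactly as in Property~(2) of the local rotation set.
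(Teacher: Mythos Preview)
Your ``only if'' direction is exactly the paper's argument. For the ``if'' direction, however, the paper does \emph{not} go through a simultaneous conjugation of the finite abelian group $\langle g,h\rangle$ to a group of rotations. Instead it uses precisely the lift computation you sketch in your final paragraph as an afterthought: fix a chart $\phi$ at $z$, note that since $\overline{f}$ and $\overline{g}$ commute one may choose \emph{commuting} lifts $\widetilde{\overline{f}},\widetilde{\overline{g}}$ to $\widetilde{\A}$, and then from $\widetilde{\overline{g}}^q=\mathrm{Id}+(p,0)$ and $\widetilde{\overline{f}}^{q'}=\mathrm{Id}+(p',0)$ compute directly
\[
(\widetilde{\overline{f}}\,\widetilde{\overline{g}})^{q'q}
=\widetilde{\overline{f}}^{q'q}\widetilde{\overline{g}}^{q'q}
=\mathrm{Id}+(q'p+qp',0),
\]
which gives $\lrho(fg)=\frac{q'p+qp'}{q'q}=\lrho(f)+\lrho(g)$.

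The comparison: your main route (steps (ii)--(iv)) is correct but invokes a substantially heavier tool --- the topological classification of finite orientation-preserving actions on $\s^2$ --- to resolve the obstacle you yourself flag in step~(iii). The paper avoids this entirely: commutativity of $f$ and $g$ is used only to obtain commuting lifts, and no simultaneous linearization (local or global) is needed. What your geometric approach buys is a more vivid picture and a statement that is reusable elsewhere; what the paper's approach buys is that it stays strictly within the lift/rotation-number formalism already set up in \S2.1, with no external input beyond the existence of commuting lifts for commuting maps.
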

\begin{proof}
   Let $f$ and $g$ be two elements in $G_0$. We recall that $[f,g]:=fgf^{-1}g^{-1}$ denotes the commutator of $f$ and $g$. If $\lrho$ is a group homomorphism, then $\lrho([f,g])$ is nul, which implies that $[f,g]=Id$. As $f$ and $g$ are arbitrary, $G_0$ is abelian.

  Conversely, assume that $G_0$ is Abelian, and let $f$ and $g$ be two elements in $G_0$. Consider a chart $\phi$ centered at $z$, and let
  $\overline{f} := \phi f\phi^{-1}$ and $\overline{g} := \phi g\phi^{-1}$ be the conjugate homeomorphisms. Both $\overline{f}$ and $\overline{g}$
  are periodic homeomorphisms of the plane $\R^2$ that preserve the orientation and fix the vector $\phi(z)=\mathbf{0}\in \R^2$. Since
  $\overline{f}$ and $\overline{g}$ commute, we can consider commuting lifts $\widetilde{\overline{f}}$ and $\widetilde{\overline{g}}$ of $\overline{f}$ and $\overline{g}$,
  respectively.  Suppose that $\rho_{\mathbf{0}}(\widetilde{\overline{f}})=\frac{p'}{q'}$ and $\rho_{\mathbf{0}}(\widetilde{\overline{g}})=\frac{p}{q}$. Then for every $\widetilde{x}$ and $\widetilde{x}'$ in $\widetilde{\A}$, we have
  $$ \widetilde{\overline{g}}^q(\widetilde{x})= \widetilde{x}+(p,0) \quad \text{and} \quad \widetilde{\overline{f}}^{q'}(\widetilde{x}')= \widetilde{x}'+(p',0).            $$
  Thus,
  \begin{align*}
    (\widetilde{\overline{f}}\widetilde{\overline{g}})^{q'q}(\widetilde{x})&= \widetilde{\overline{f}}^{q'q}\left(\widetilde{\overline{g}}^{q'q}(\widetilde{x})\right)\\
                                                     &= \widetilde{\overline{g}}^{q'q}(\widetilde{x})+(qp',0)\\
                                                     &=\widetilde{x}+(q'p,0)+(qp',0).
  \end{align*}
  Therefore $$ \lrho(fg)= \frac{q'p+qp'}{q'q}=  \frac{p}{q}+\frac{p'}{q'}=  \lrho(f)+ \lrho(g).   $$
  This shows that $\lrho$ is a group homomorphism.
\end{proof}

\subsection{Consequences for Abelian, periodic subgroups of ori-\\entation-preserving homeomorphisms of $\s^2$ that fix a point}

From Proposition \ref{local rotation map homomorphism}, we know that the ``local rotation number map'' is an injective group homomorphism for Abelian, periodic subgroups of $\homeog{+}{}{\s^2;z}$. We deduce the following result.

\begin{lema}\label{lemagruposabelianos1}
   Let $A$ be an Abelian, periodic subgroup of $\homeog{+}{}{\s^2;z}$. Let $a$ and $b$ be two elements of $A$ with $\ord(a)=\ord(b)$. Then there exists an integer $i\in \{1,\ldots, \ord(a)-1\}$ such that $b=a^{i}$. In particular there exists at most one element of order $2$ in $A$.
\end{lema}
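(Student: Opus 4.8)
The plan is to push everything through the ``local rotation number map'' $\lrho$. By the remark preceding this lemma (itself a consequence of Proposition \ref{local rotation map homomorphism} together with the fact that $\lrho(g)=0$ forces $g=Id$), the map $\lrho\colon A\to\T{1}$ is an injective group homomorphism. The first step is to pin down the order of $\lrho(a)$ in $\T{1}$. Setting $q=\ord(a)$ and using Property (2) of Lemma \ref{lemapropertiesrotationset}, we have $\lrho(a^k)=k\,\lrho(a)$ for all $k\in\Z$; hence, by injectivity, $k\,\lrho(a)=0$ in $\T{1}$ if and only if $a^k=Id$, i.e. if and only if $q\mid k$. Thus $\lrho(a)$ is an element of order exactly $q$ in $\T{1}$, so $\lrho(a)=p/q\bmod 1$ for some integer $p$ coprime to $q$. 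The identical reasoning shows that $\lrho(b)$ also has order $q$ in $\T{1}$.

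Next I would invoke the structure of finite subgroups of $\T{1}$: since $\gcd(p,q)=1$, the subgroup generated by $\lrho(a)$ is $\frac1q\Z/\Z$, the unique subgroup of $\T{1}$ of order $q$. As $\lrho(b)$ has order $q$ (in particular order dividing $q$), it lies in $\langle\lrho(a)\rangle$, so $\lrho(b)=i\,\lrho(a)=\lrho(a^i)$ for some $i\in\{0,1,\dots,q-1\}$. Injectivity of $\lrho$ then gives $b=a^i$. Finally, if $q\ge 2$ the value $i=0$ is excluded, since it would force $b=Id$ against $\ord(b)=q$; hence $i\in\{1,\dots,q-1\}$, as claimed. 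Applying this with $\ord(a)=\ord(b)=2$ leaves only $i=1$, that is $b=a$, which proves that $A$ contains at most one element of order $2$.

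There is no serious obstacle here. The only points deserving care are the injectivity of $\lrho$ on $A$ — which genuinely uses that $A$ is Abelian, via Proposition \ref{local rotation map homomorphism} — and the identification of $\langle\lrho(a)\rangle$ with the unique cyclic subgroup of $\T{1}$ of order $q$, which relies on $p$ and $q$ being coprime. The trivial case $\ord(a)=1$ is vacuous (the index set $\{1,\dots,0\}$ being empty) and may be disregarded.
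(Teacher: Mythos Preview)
Your proof is correct and follows essentially the same approach as the paper: both use that $\lrho$ is an injective group homomorphism on the Abelian group $A$, so that $\lrho(a)$ and $\lrho(b)$ are elements of the same order $q$ in $\T{1}$, hence generate the same (unique) cyclic subgroup, whence $\lrho(b)=\lrho(a^{i})$ for some $i$ and injectivity gives $b=a^{i}$. You have simply spelled out more carefully the steps that the paper compresses into a single sentence.
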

\begin{proof}
  Since $\ord(a)=\ord(b)$, there exists an integer $i\in \{1,\ldots, \ord(a)-1\}$ such that $\lrho(b)=\lrho(a^{i})$. As the local rotation number map, $\lrho$, is injective on $A$, we deduce that $b=a^{i}$.
\end{proof}

\begin{lema}\label{lemagruposabelianos2}
  If $A$ is a finite, Abelian subgroup of $\homeog{+}{}{\s^2;z}$, then $A$ is cyclic.
\end{lema}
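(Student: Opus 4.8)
The plan is to exploit the injectivity of the local rotation number map on Abelian periodic subgroups — which is already essentially contained in Proposition \ref{local rotation map homomorphism} — and then to invoke the elementary fact that every finite subgroup of the circle $\T{1}$ is cyclic.

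First I would note that, since $A$ is Abelian and periodic, Proposition \ref{local rotation map homomorphism} gives that the restriction $\lrho|_A \colon A \to \T{1}$ is a group homomorphism. Its kernel is trivial: if $g \in A$ satisfies $\lrho(g) = 0$, then $g = Id$ by the observation made right after the definition of $\lrho(g)$. Hence $\lrho|_A$ is an injective homomorphism, so $A$ is isomorphic to a finite subgroup of $\T{1} = \R/\Z$.

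It then remains to check that every finite subgroup $H$ of $\R/\Z$ is cyclic. Writing $n = |H|$, Lagrange's theorem yields $n x = 0$ for every $x \in H$, so $H \subseteq \tfrac{1}{n}\Z/\Z$; the latter group is cyclic of order exactly $n$, and a subgroup of a cyclic group of order $n$ having cardinality $n$ must be the whole group, so $H = \tfrac{1}{n}\Z/\Z$ is cyclic. Applying this to $H = \lrho(A)$, which has the same cardinality as $A$ because $\lrho|_A$ is injective, we conclude that $A$ is cyclic.

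I do not expect a real obstacle here: the substance of the argument lies in Proposition \ref{local rotation map homomorphism}, and the rest is standard. (Alternatively, one could avoid the circle entirely and argue from Lemma \ref{lemagruposabelianos1}: by the structure theorem for finite Abelian groups, a non-cyclic $A$ would contain two distinct subgroups of some common prime order $p$, hence two elements of order $p$ with neither a power of the other, contradicting Lemma \ref{lemagruposabelianos1}.)
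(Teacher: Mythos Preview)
Your argument is correct. It differs from the paper's route, though both ultimately rest on Proposition \ref{local rotation map homomorphism}. The paper does not embed $A$ into $\T{1}$ and quote the classification of its finite subgroups; instead it works internally via Lemma \ref{lemagruposabelianos1}: it picks $b\in A$ of maximal order, argues that $\ord(a)\mid\ord(b)$ for every $a\in A$ (otherwise one could produce an element of strictly larger order), and then, writing $\ord(b)=m\,\ord(a)$, applies Lemma \ref{lemagruposabelianos1} to $a$ and $b^{m}$ to conclude $a\in\langle b\rangle$. Your approach is shorter and more conceptual, trading the maximal-order computation for the single observation that finite subgroups of $\T{1}$ are cyclic; the paper's approach has the minor advantage of staying entirely within the lemmas already established and not invoking any structure of $\T{1}$ beyond what is encoded in Lemma \ref{lemagruposabelianos1}. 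Your parenthetical alternative (structure theorem plus Lemma \ref{lemagruposabelianos1}) is closer in spirit to what the paper actually does.
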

\begin{proof}
  Let $b$ be an element of $A$ with maximal order, and let $B$ be the subgroup generated by $b$. Let $a$ be an element of $A$. If $\ord(a)$ does not divide $\ord(b)$, then $ab$ has order the lowest commom multiple of $\ord(a)$ and $\ord(b)$. This contradicts the maximality of $\ord(b)$. Therefore, $\ord(a)$ divides $\ord(b)$. Hence, there is an integer $m\geq 1$ such that $\ord(b)=m\ord(a)$, and so $\ord(b^{m})=\ord(a)$. By the previous lemma, there exists an integer $i\in \{1,\ldots, \ord(a)-1\}$ such that $a=b^{mi}$. It follows that $a$ belongs to $B$. As $a$ is arbitrary, we deduce that $A=B$.
\end{proof}

\section{Burnside problem for $2$-groups of homeomorphisms of $\s^2$: particular cases}

In order to prove Theorem A, it is enough to prove that every finitely-generated $2$-group $G$ of \textit{orientation-preserving} homeomorphisms of the $2$-dimensional sphere is finite. This is the purpose of the following two subsections. Our proof consists in  first considering the case where $G$ has a global fixed point and latter the case  where $G$ has a finite orbit of cardinality $2$. Finally, we settle the general case, and for this we prove that the group $G$ contains only a finite number of involutions.

\subsection{The case where the group has a global fixed point}

In this section we will prove Theorem B, that is, every finitely-generated $2$-group of orientation-preserving homeomorphisms of the $2$-dimensional sphere that has a global fixed point is finite and cyclic. The idea of the proof is as follows: Notice that a (nontrivial) $2$-group $G_0$ always contains involutions, that is, elements of order $2$. The key step consists in proving that, in our case, there is a unique involution in $G_0$. This implies that such an involution must belong to the center of $G_0$, that is, it commutes with each element of $G_0$. Since we are assuming that $G_0$ is a $2$-group, we can deduce that $G_0$ is abelian using the following property (see Proposition \ref{propocle} below): ``If $f$ and $g^2$ in $G_0$ commute, then $f$ and $g$ commute''. Finally, using that $G_0$ is finitely generated, we can conclude that $G_0$ is finite, and hence
 cyclic by Lemma \ref{lemagruposabelianos2}.

 We start with a lemma that follows from classical properties of the local rotation set around a fixed point (Lemma \ref{lemapropertiesrotationset}).

\begin{lema}\label{lemagorder2}
  Let $g$ be a finite-order element in $\homeog{+}{}{\s^2;z}$. Suppose that $g$ is conjugate (by an element in $\homeog{+}{}{\s^2;z}$) to its inverse. Then $g^2=Id$.
\end{lema}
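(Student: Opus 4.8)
The plan is to exploit the local rotation number $\lrho$ around the fixed point $z$, which is well defined for finite-order elements of $\homeog{+}{}{\s^2;z}$, together with its two basic properties recorded in Lemma \ref{lemapropertiesrotationset}: invariance under conjugacy by elements of $\homeog{+}{}{\s^2;z}$ (Property (1)), and the relation $\lrho(g^q)=q\lrho(g)$ in $\T{1}$ for every $q\in\Z$ (Property (2)). The whole argument is then a two-line computation in $\T{1}=\R/\Z$.

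First I would write the hypothesis explicitly: there is $\varphi\in\homeog{+}{}{\s^2;z}$ with $\varphi^{-1}g\varphi=g^{-1}$. Applying Property (1) gives $\lrho(g^{-1})=\lrho(\varphi^{-1}g\varphi)=\lrho(g)$, while Property (2) with $q=-1$ gives $\lrho(g^{-1})=-\lrho(g)$. Combining these yields $2\lrho(g)=0$ in $\T{1}$, so $\lrho(g)\in\{0,\tfrac12\}$.

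Then I would conclude by cases. If $\lrho(g)=0$, then $g=Id$ (this is exactly the remark made right after the definition of $\lrho$), and in particular $g^2=Id$. If $\lrho(g)=\tfrac12$, then by Property (2) we have $\lrho(g^2)=2\lrho(g)=1=0$ in $\T{1}$; since $g^2$ is again a finite-order element of $\homeog{+}{}{\s^2;z}$, the same remark gives $g^2=Id$. In either case $g^2=Id$, which is the claim.

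There is essentially no genuine obstacle here; the only point that needs attention is that the conjugating map is assumed to belong to $\homeog{+}{}{\s^2;z}$, i.e.\ to be orientation-preserving and to fix $z$ — this is precisely the hypothesis under which the invariance of $\lrho$ in Lemma \ref{lemapropertiesrotationset} applies. Without it the argument collapses: conjugating by an orientation-reversing homeomorphism, or by one that moves $z$, could flip the sign of $\lrho$, and the identity $\lrho(g^{-1})=\lrho(g)$ — the engine of the proof — would no longer be available.
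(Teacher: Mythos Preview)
Your argument is correct and follows the same route as the paper: use conjugacy-invariance of $\lrho$ to get $\lrho(g)=\lrho(g^{-1})=-\lrho(g)$, hence $\lrho(g^2)=2\lrho(g)=0$, and conclude $g^2=Id$ from finite order. The paper simply goes straight to $\lrho(g^2)=0$ without splitting into the cases $\lrho(g)=0$ and $\lrho(g)=\tfrac12$, but this is a cosmetic difference only.
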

\begin{proof}
 By hypothesis, there exists $\varphi \in \homeog{+}{}{\s^2;z}$ such that $\varphi^{-1} g \varphi = g^{-1}$. By Lemma \ref{lemapropertiesrotationset},  $$\lrho(g)= \lrho(g^{-1})= - \lrho(g).$$ This implies that $0=2 \lrho(g)= \lrho(g^2)$. Since $g$ has finite order, it must satisfy $g^2=Id$.
\end{proof}

\begin{prop}
    Let $G_0$ be a periodic subgroup of $\homeog{+}{}{\s^2;z}$. Let $\sigma$ and $\sigma'$ be two elements of order $2$ in $G_0$. Then $\sigma$ and $\sigma'$ commute.
\end{prop}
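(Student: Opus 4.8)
The plan is to reduce the statement to Lemma \ref{lemagorder2} by examining the element $g := \sigma\sigma'$ together with the way $\sigma$ conjugates it. First I would record that $g$ belongs to $\homeog{+}{}{\s^2;z}$, being a composition of two orientation-preserving homeomorphisms that both fix $z$, and that $g$ has finite order because $G_0$ is a periodic group. Next, using that $\sigma$ and $\sigma'$ are involutions (so $\sigma^{-1}=\sigma$ and $\sigma'^{-1}=\sigma'$), a one-line computation shows
\[
\sigma g \sigma^{-1} \;=\; \sigma(\sigma\sigma')\sigma \;=\; \sigma'\sigma \;=\; (\sigma\sigma')^{-1} \;=\; g^{-1},
\]
so $g$ is conjugate to its own inverse by the element $\sigma \in G_0 \subseteq \homeog{+}{}{\s^2;z}$. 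Lemma \ref{lemagorder2} then yields $g^2=Id$, that is, $\sigma\sigma'\sigma\sigma'=Id$. Multiplying this relation on the left by $\sigma$ and on the right by $\sigma'$ gives $\sigma'\sigma=\sigma\sigma'$, which is exactly the assertion.

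I do not expect a genuine obstacle here: the whole point is the observation that $\langle\sigma,\sigma'\rangle$ is a finite dihedral-type group in which $\sigma$ inverts the cyclic subgroup generated by $\sigma\sigma'$, and that Lemma \ref{lemagorder2} — itself a consequence of the conjugacy-invariance and additivity of the local rotation number (Lemma \ref{lemapropertiesrotationset}) — forces that cyclic subgroup to have order at most $2$. The only point deserving a moment's care is to make sure $g=\sigma\sigma'$ really has finite order, so that Lemma \ref{lemagorder2} is applicable; this is immediate from the standing hypothesis that $G_0$ is periodic. Note also that this argument uses neither that $G_0$ is a $2$-group nor that it is finitely generated — only that it is periodic and consists of orientation-preserving homeomorphisms fixing $z$.
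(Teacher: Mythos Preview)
Your proof is correct and follows essentially the same route as the paper's: both set $g=\sigma\sigma'$, use that $\sigma$ conjugates $g$ to $g^{-1}$, invoke Lemma \ref{lemagorder2} to get $g^2=Id$, and read off commutativity from $(\sigma\sigma')^2=Id$. Your write-up is in fact slightly cleaner in that you explicitly verify $g$ has finite order before applying Lemma \ref{lemagorder2}, and your closing remark that neither the $2$-group nor the finite-generation hypothesis is used is accurate.
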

\begin{proof}
  We know that, in any group, $\sigma\sigma'$ is conjugate (by $\sigma$) to $\sigma'\sigma$.
  Indeed, 
  $$ \sigma'\sigma= (\sigma^{-1} \sigma )\sigma' \sigma = \sigma^{-1} (\sigma \sigma' )\sigma.$$
  Since $\sigma$ and $\sigma'$ have order 2, we have that $ (\sigma\sigma')^{-1}= \sigma'\sigma$. By Lemma \ref{lemagorder2}, it follows that $\sigma \sigma'$ have order $2$. Since $\sigma$, $\sigma'$, and $\sigma \sigma'$ have order $2$, we deduce (using an argument due to Burnside) that $$  Id=(\sigma \sigma')^2= \sigma \sigma' \sigma \sigma' = \sigma \sigma' \sigma ^{-1} \sigma'^{-1} :=[\sigma,\sigma'].$$ This implies that $\sigma$ and $\sigma'$ commute.
\end{proof}

We deduce the following properties.

\begin{prop}\label{propuniqueelementofordertwo}
  Let $G_0$ be a periodic subgroup of $\homeog{+}{}{\s^2;z}$. Then $G_0$ has at most one element of order $2$.
\end{prop}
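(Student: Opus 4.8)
The plan is to package the preceding proposition together with the results on Abelian periodic subgroups obtained in Section~2.5. Suppose, for contradiction, that $\sigma$ and $\sigma'$ are two \emph{distinct} elements of order $2$ in $G_0$. By the proposition just proved, $\sigma$ and $\sigma'$ commute, so the subgroup $A := \langle \sigma, \sigma' \rangle$ they generate is Abelian. Being a subgroup of the periodic group $G_0 \subseteq \homeog{+}{}{\s^2;z}$, it is itself periodic and contained in $\homeog{+}{}{\s^2;z}$. Thus $A$ is an Abelian, periodic subgroup of $\homeog{+}{}{\s^2;z}$.

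Next I would apply Lemma~\ref{lemagruposabelianos1} to $A$: its last assertion states that such a group contains at most one element of order $2$. Since both $\sigma$ and $\sigma'$ lie in $A$ and have order $2$, this forces $\sigma = \sigma'$, contradicting our assumption. Alternatively, one can argue directly with the local rotation number map: by Proposition~\ref{local rotation map homomorphism}, $\lrho$ restricted to $A$ is a homomorphism into $\T{1}$, and it is injective because $\lrho(g) = 0$ implies $g = Id$. From $2\lrho(\sigma) = \lrho(\sigma^2) = 0$ and $\sigma \neq Id$ we get $\lrho(\sigma) = 1/2$, and likewise $\lrho(\sigma') = 1/2$; injectivity then yields $\sigma = \sigma'$. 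Either route gives the conclusion.

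I do not expect any genuine obstacle here: the statement is essentially a corollary of the fact (established in the preceding proposition) that involutions in $G_0$ pairwise commute, combined with Lemma~\ref{lemagruposabelianos1}. The only point requiring a line of verification is that the two-generator subgroup $A$ inherits all the hypotheses needed — Abelian (from commutativity of its two order-$2$ generators), periodic (as a subgroup of $G_0$), and contained in $\homeog{+}{}{\s^2;z}$ — each of which is immediate.
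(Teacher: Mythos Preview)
Your proof is correct and follows essentially the same line as the paper: use the preceding proposition to conclude that the subgroup generated by the two involutions is Abelian and periodic, then invoke Lemma~\ref{lemagruposabelianos1} to force $\sigma=\sigma'$. The alternative direct argument via $\lrho$ you sketch is exactly the content of that lemma, so both routes coincide with the paper's approach.
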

\begin{proof}
  Suppose that $\sigma$ and $\sigma'$ are two elements of order 2 in $G_0$. By the previous proposition, the group generated by $\sigma$ and $\sigma'$ is an Abelian, periodic subgroup of $\homeog{+}{}{\s^2;z}$. By Lemma \ref{lemagruposabelianos1}, we deduce that $\sigma=\sigma'$.
\end{proof}

\begin{coro}\label{sigmaincenterof G}
  Let $G_0$ be a periodic subgroup of $\homeog{+}{}{\s^2;z}$. If $\sigma\in G_0$ has order $2$, then $\sigma$ belongs to the center of $G_0$.
\end{coro}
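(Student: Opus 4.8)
The plan is to deduce this corollary directly from Proposition \ref{propuniqueelementofordertwo} together with a conjugation argument. First I would take any element $g \in G_0$ and consider the conjugate $g \sigma g^{-1}$. Since conjugation is an automorphism of $G_0$, the element $g\sigma g^{-1}$ lies in $G_0$ and has the same order as $\sigma$, namely order $2$. By Proposition \ref{propuniqueelementofordertwo}, $G_0$ contains at most one element of order $2$, so we must have $g\sigma g^{-1} = \sigma$. Since $g$ was arbitrary, $\sigma$ commutes with every element of $G_0$, i.e. $\sigma$ lies in the center.

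There is essentially no obstacle here: the work has all been done in Proposition \ref{propuniqueelementofparttwo}. The only point to be slightly careful about is the edge case where $\sigma$ might equal the identity — but $\sigma$ is assumed to have order $2$, so $\sigma \neq Id$, and in any case a central element of order $2$ is still central. (Typo note: I mean Proposition \ref{propuniqueelementofordertwo}.) The argument is the standard fact that a unique element of a given order in a group is automatically central, applied in our concrete setting.

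\begin{proof}
  Let $g$ be any element of $G_0$. Since conjugation by $g$ is an automorphism of $G_0$, the element $g\sigma g^{-1}$ belongs to $G_0$ and has order $2$. By Proposition \ref{propuniqueelementofordertwo}, $G_0$ has at most one element of order $2$, hence $g\sigma g^{-1}=\sigma$, that is, $g\sigma = \sigma g$. As $g$ was arbitrary, $\sigma$ commutes with every element of $G_0$, so $\sigma$ belongs to the center of $G_0$.
\end{proof}
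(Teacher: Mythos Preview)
Your proof is correct and essentially identical to the paper's: both take an arbitrary $g\in G_0$, note that $g\sigma g^{-1}$ has order $2$, and apply Proposition~\ref{propuniqueelementofordertwo} to conclude $g\sigma g^{-1}=\sigma$. Your version is slightly more explicit (mentioning that conjugation is an automorphism), but the argument is the same.
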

\begin{proof}
  Let $g$ be an element of $G_0$. Since $g\sigma g^{-1}$ has order $2$, by Proposition \ref{propuniqueelementofordertwo}, we deduce that $g\sigma g^{-1}=\sigma$. This implies that $g$ and $\sigma$ commute.
\end{proof}

\begin{lema}
  Let $G_0$ be a periodic subgroup of $\homeog{+}{}{\s^2;z}$. Let $f$ and $g$ be two elements in $G_0$. Suppose that $f$ and $g^2$ commute. Then for every $i \in \{0,\ldots, \ord(f)-1\}$ the element $[f^{i},g]$, the commutator of $f^{i}$ and $g$, satisfies $[f^{i},g]^2=Id$. Moreover, $[g,f^{i}]^2=Id$
\end{lema}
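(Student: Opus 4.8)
The plan is to exploit the hypothesis that $f$ and $g^2$ commute to force a rigidity on the conjugates of $f$ by $g$, and then use the "unique involution" principle (Proposition \ref{propuniqueelementofordertwo}) applied inside a suitable abelian subgroup, or use Lemma \ref{lemagorder2} directly, to conclude that the commutators in question are involutions. First I would set $h := gfg^{-1}$. Then $h^i = g f^i g^{-1}$, so $[f^i,g] = f^i g f^{-i} g^{-1} = f^i (g f^{-i} g^{-1}) = f^i h^{-i}$. The key observation is that $f$ and $h$ commute: indeed $h f h^{-1} = g f (g^{-1} f g) f^{-1} g^{-1}$, and since $g^2$ commutes with $f$ one checks that $g^{-1} f g$ commutes with $f$ — more precisely, conjugating the relation $f g^2 = g^2 f$ by $g^{-1}$ gives $(g^{-1}fg) g^2 = g^2 (g^{-1}fg)$, and combining with the original relation shows $f$ and $g^{-1} f g$ generate a group in which… — so I would instead argue as follows: $h = gfg^{-1}$ satisfies $h^2 = g f^2 g^{-1} = f^2$ (using that $g$ commutes with $f^2$). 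Hence $f$ and $h$ are two elements of the periodic group $G_0$ with $f^2 = h^2$.

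Now consider the subgroup $H$ generated by $f$ and $h$. Since $f^2 = h^2$ is central in $H$ (it commutes with $f$ trivially and with $h$ trivially), the quotient $H/\langle f^2\rangle$ is generated by two elements whose squares are trivial, i.e. it is a quotient of the infinite dihedral group; but $G_0$ is periodic, so this quotient is a finite dihedral (or cyclic) group. In any case the element $fh^{-1}$ maps to a product of two involutions in $H/\langle f^2 \rangle$. I want to show $(fh^{-1})^2 = Id$. For this I would use that $f h^{-1}$ is conjugate to its inverse: indeed $h^{-1} f = h^{-1}(f h^{-1}) h$, and $(f h^{-1})^{-1} = h f^{-1} = h h^{-2} f = h^{-1} f^{-2} f \cdot f = \ldots$ — the clean version is $(fh^{-1})^{-1} = h f^{-1}$, and since $f^2 = h^2$ we get $h f^{-1} = h^{-1} h^2 f^{-1} = h^{-1} f^2 f^{-1} = h^{-1} f$, which is $f^{-1}(f h^{-1} f^{-1} \cdot f) \cdot$ — so $(fh^{-1})^{-1} = h^{-1} f = f^{-1}(f h^{-1} f)^{-1}$ up to rearrangement shows $fh^{-1}$ is conjugate (by $f$, using $f^2=h^2$) to its own inverse. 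By Lemma \ref{lemagorder2}, $(f h^{-1})^2 = Id$, i.e. $[f,g]^2 = Id$.

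To get the statement for all $i \in \{0,\ldots,\ord(f)-1\}$, I would replace $f$ by $f^i$ throughout, after first checking the induction hypothesis transfers: one needs $f^i$ and $g^2$ to commute, which is immediate since $f$ and $g^2$ commute. Then the argument above with $f^i$ in place of $f$ gives $[f^i,g]^2 = Id$ directly. Finally, $[g,f^i] = (f^i g f^{-i} g^{-1})^{-1} \cdot$ — more simply, $[g,f^i] = g f^i g^{-1} f^{-i}$ and its inverse is $f^i g f^{-i} g^{-1} = [f^i,g]$, while also $[g,f^i]$ is conjugate to $[f^i,g]$ by $g f^i$ (or one notes $[g,f^i] = f^{-i}[f^i,g]^{-1}f^i$), so $[g,f^i]^2 = Id$ follows from $[f^i,g]^2 = Id$ by the same conjugacy-invariance, or simply because $[g,f^i]$ is conjugate to $[f^i,g]^{-1}$ whose square is also trivial.

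\textbf{The main obstacle} will be pinning down the exact conjugacy that makes $[f^i,g]$ conjugate to its inverse so that Lemma \ref{lemagorder2} applies; the computation is elementary group theory but one must be careful that the conjugating element lies in $\homeog{+}{}{\s^2;z}$ (which it does automatically, being a word in $f$ and $g$) and that the relation $f^{2i} = (g f^i g^{-1})^2$ is used in the right place. The identity $(gf^ig^{-1})^2 = g f^{2i} g^{-1} = f^{2i}$ is where the hypothesis "$f$ and $g^2$ commute" is genuinely needed — note it is \emph{not} enough that $f$ and $g$ commute, and conversely the weaker hypothesis is exactly what is available — so the proof must route through $g^2$ and cannot shortcut.
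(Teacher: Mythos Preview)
Your proposal contains a genuine gap: you have swapped the roles of $f$ and $g$ in the commuting hypothesis. The assumption is that $f$ commutes with $g^2$, i.e.\ $fg^2=g^2f$; it is \emph{not} assumed that $g$ commutes with $f^2$. Yet your key step reads ``$h^2 = g f^2 g^{-1} = f^2$ (using that $g$ commutes with $f^2$)'', and at the end you repeat ``the identity $(gf^ig^{-1})^2 = g f^{2i} g^{-1} = f^{2i}$ is where the hypothesis `$f$ and $g^2$ commute' is genuinely needed''. The implication $fg^2=g^2f \Rightarrow gf^{2}g^{-1}=f^{2}$ is false in general (take $g$ of order~$2$ and $f$ any element with $gf^2\neq f^2g$, e.g.\ $f=(1\,2\,3\,4)$, $g=(1\,2)$ in $S_4$). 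So the relation $h^2=f^2$ on which your conjugacy argument rests is simply unavailable.

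The paper's computation uses the hypothesis in the correct direction. From $f^ig^2=g^2f^i$ one gets $g^{-1}f^{-i}=gf^{-i}g^{-2}$, and substituting this into $[g,f^i]=gf^ig^{-1}f^{-i}$ yields
\[
[g,f^i]=gf^i(gf^{-i}g^{-2})=g\,[f^i,g]\,g^{-1}.
\]
Since $[g,f^i]=[f^i,g]^{-1}$, this exhibits $[f^i,g]$ as conjugate to its own inverse by the element $g\in\homeog{+}{}{\s^2;z}$, and Lemma~\ref{lemagorder2} gives $[f^i,g]^2=Id$. In your notation $h=gfg^{-1}$, the genuine consequence of $fg^2=g^2f$ is $g^{-1}fg=gfg^{-1}=h$, hence $ghg^{-1}=g^2fg^{-2}=f$; conjugation by $g$ \emph{swaps} $f$ and $h$, so $g(fh^{-1})g^{-1}=hf^{-1}=(fh^{-1})^{-1}$. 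That is the conjugacy you were looking for --- via $g$, not via $f$, and without ever needing $h^2=f^2$.
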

\begin{proof}
  Since $f$ and $g^2$ commute, we have that $f^{i}$ and $g^2$ commute, that is, $f^{i}g^2=g^2 f^{i}$. Hence
  $$  g^{-1}f^{-i} = g f^{-i}g^{-2}.$$ Consequently,
  $$ [g,f^{i}] = g f^{i}g^{-1}f^{-i}= g f^i(g f^{-i}g^{-2}) = g (f^i g f^{-i} g^{-1})g^{-1} = g [f^i,g] g^{-1}.$$
  Since $ [f^{i},g]^{-1}= [g,f^{i}]$, it follows from Lemma \ref{lemagorder2} that $[f^{i},g]^2=Id$. Finally, notice that $[g,f^{i}]= [f^{i},g]^{-1}$. Hence we deduce that $[g,f^{i}]^2=Id$.
\end{proof}

\begin{prop}\label{propocle}
   Let $G_0$ be a periodic subgroup of $\homeog{+}{}{\s^2;z}$. Let $f$ and $g$ be two elements in $G_0$. If $f$ and $g^2$ commute, then $f$ and $g$ commute.
\end{prop}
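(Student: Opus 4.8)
The plan is to argue by contradiction. Suppose that $f$ and $g$ do not commute, so that the commutator $c:=[f,g]=fgf^{-1}g^{-1}$ is nontrivial; I will produce a contradiction using the local rotation number.

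First I would apply the preceding lemma with $i=1$: since $f$ and $g^{2}$ commute, it gives $c^{2}=[f,g]^{2}=Id$, so that $c$ is an element of order exactly $2$. By Corollary \ref{sigmaincenterof G} (which rests on the uniqueness of the involution, Proposition \ref{propuniqueelementofordertwo}), $c$ then lies in the center of $G_0$. In particular $c$ commutes with $g$, so $H:=\langle c,g\rangle$ is an Abelian, periodic subgroup of $\homeog{+}{}{\s^2;z}$, and Proposition \ref{local rotation map homomorphism} tells us that $\lrho$ restricts to a group homomorphism $H\to\T{1}$.

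Next I would use the elementary identity $fgf^{-1}=[f,g]\,g=c\,g$, valid in any group. On the one hand, conjugacy invariance of the local rotation number (Lemma \ref{lemapropertiesrotationset}(1), applied with $\varphi=f^{-1}$) yields $\lrho(fgf^{-1})=\lrho(g)$. On the other hand, evaluating the homomorphism on $H$ gives $\lrho(fgf^{-1})=\lrho(cg)=\lrho(c)+\lrho(g)$. Comparing the two, $\lrho(c)=0$ in $\T{1}$; but then $c=Id$, contradicting that $c$ has order $2$. Hence $c=Id$, i.e.\ $f$ and $g$ commute.

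The only step requiring real care is checking that the hypothesis of Proposition \ref{local rotation map homomorphism} is met — that is, that $c$ genuinely commutes with $g$ — and this is exactly where the centrality of involutions (Corollary \ref{sigmaincenterof G}), hence ultimately the uniqueness of the involution in $G_0$, is used. Everything else is a short formal computation, so I do not expect any further obstacle.
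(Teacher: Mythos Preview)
Your proof is correct and follows the same strategy as the paper's: the commutator $c=[f,g]$ is an involution by the preceding lemma, hence central by Corollary~\ref{sigmaincenterof G}, and then conjugacy invariance together with the homomorphism property of $\lrho$ on the abelian group $\langle c,g\rangle$ force $\lrho(c)=0$. The only difference is cosmetic: you use the identity $fgf^{-1}=cg$ directly, whereas the paper arrives at the analogous conjugacy relation through the slightly longer manipulation $g=(gf)\,[g,f^{-1}]\,g\,(gf)^{-1}$ before applying $\lrho$.
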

\begin{proof}
 By the previous lemma, for $i\in \{0,\ldots, \ord(f)-1\}$, the element $[f^i,g]$ satisfies $[f^i,g]^2=Id$. Hence either $[f^i,g]=Id$ or $[f^i,g]\neq Id$ and $[f^i,g]^2=Id$ (and the same property holds for $[g,f^i]$). Moreover, by Proposition \ref{propuniqueelementofordertwo}, there is a unique element $\sigma$ of order $2$ in $G_0$. Therefore, either $[f^i,g]=Id$ or $[f^i,g]=\sigma$. Suppose by contradiction that $f$ and $g$ do not commute. Then $[f,g]=\sigma$. We claim that
  $$ [f^{i},g]=
  \begin{cases}
     \sigma, \text{ if $i$ is odd,}\\
     Id, \text{ if $i$ is even}.
  \end{cases}  $$
  Indeed, if there exists $i$ such that $[f^{i+1},g]=[f^i,g]$, then this easily yields that $f$ and $g$ commute, which is a contradiction.

  Since $\sigma$ has order $2$, we have that $[f,g]^2=Id$ and so $g[f,g]^2=g $, that is,
  \begin{align*}
     g & = g(fgf^{-1}g^{-1})(fgf^{-1}g^{-1}) \\
     & =  (gf)(gf^{-1}g^{-1}f)g (f^{-1}g^{-1}) \\
     &  = (gf)[g,f^{-1}] g (f^{-1}g^{-1}) .
  \end{align*}

  Applying the local rotation map and using its invariance under (local) topological conjugacy, we obtain $$ \lrho ([g,f^{-1}] g )= \lrho(g).  $$
  We know that $\sigma=[g,f^{-1}]=[g,f^{\ord(f)-1}]$ commutes with $g$ (Corollary \ref{sigmaincenterof G}), and that the local rotation map restricted to the group generated by $\sigma$ and $g$ is a group homomorphism (Proposition \ref{local rotation map homomorphism}).  Thus $$ \lrho(\sigma) +\lrho(g )= \lrho(g).  $$ Therefore, $\lrho(\sigma)=0$, and so $\sigma=Id$. This contradiction proves that $f$ and $g$ commute.
\end{proof}

\begin{proof}[\textbf{End of the proof of Theorem B}]
  Let $G_0$ be a finite-generated $2$-group contained in $\homeog{+}{}{\s^2;z}$. Let $f$ and $g$ be two elements in $G_0$. Since $G_0$ is a $2$-group, $g$ has order $2^{p+1}$ for a certain integer $p\geq 0$, and then $g^{2^p}$ has order $2$. It follows from Corollary \ref{sigmaincenterof G} that $g^{2^p}$ and $f$ commute. Applying the previous proposition, we obtain that $g^{2^{p-1}}$ and $f$ commute. Iterating this argument, we get that $f$ and $g$ commute. Therefore, $G_0$ is an Abelian, finitely-generated group, and so it is finite. Finally, we deduce from Lemma \ref{lemagruposabelianos2} that $G_0$ is cyclic.
\end{proof}

\subsection{The case where the group has an orbit of cardinality~$2$} \label{subsection proof of theorem c}

In this section, we prove Theorem C, that is, every finitely-generated $2$-group $G$ of orientation-preserving homeomorphisms of the $2$-dimensional sphere which has an orbit of cardinality $2$ is finite. More precisely, it is either a cyclic or a dihedral group. Let $z$ be a point with $G$-orbit of cardinality $2$. We write $\mathcal{O}_G(z)=\{z,z'\}$. We will consider the subgroup $G_0$ of the homeomorphisms that fix both $z$ and $z'$.

\begin{lema}\label{lema1 proof theorem c}
  The group $G_0$ is an index-$2$, normal subgroup of $G$. In particular, $G_0$ is a finitely-generated $2$-group contained in $\homeog{+}{}{\s^2;z}$.
\end{lema}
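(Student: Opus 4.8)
The plan is to exhibit $G_0$ as the kernel of the natural action of $G$ on the two-point set $\mathcal{O}_G(z)=\{z,z'\}$, and then to deduce everything from elementary group theory together with the facts already available in the excerpt. Concretely, the action of $G$ on $\mathcal{O}_G(z)$ gives a homomorphism $\Phi\colon G\to \mathrm{Sym}(\{z,z'\})\cong\Z/2\Z$, whose kernel is precisely the set of elements of $G$ fixing both $z$ and $z'$, that is, $G_0=\ker\Phi$. Hence $G_0$ is a normal subgroup of $G$, and $G/G_0$ embeds into $\Z/2\Z$; since $\mathcal{O}_G(z)$ has cardinality exactly $2$, the orbit is not a single point, so $\Phi$ is surjective and $[G:G_0]=2$.

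Next I would record the consequences. By Schreier's lemma (already invoked in the discussion following Theorem A), a finite-index subgroup of a finitely-generated group is finitely generated, so $G_0$ is finitely generated. It is a $2$-group because it is a subgroup of the $2$-group $G$ (any element of $G_0$ has order a power of $2$, being an element of $G$). Finally, every element of $G_0$ fixes $z$; since $G$ — and hence $G_0$ — consists of orientation-preserving homeomorphisms of $\s^2$, each element of $G_0$ lies in $\homeog{+}{}{\s^2;z}$ by definition of that group. This gives the final sentence of the statement.

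There is essentially no serious obstacle here: the only point requiring a word of care is that one must know there \emph{is} a nontrivial element of $G$ swapping $z$ and $z'$, which is exactly the hypothesis that $\mathcal{O}_G(z)$ has cardinality $2$ rather than $1$ — this is what forces $[G:G_0]=2$ as opposed to $G_0=G$. Everything else (normality of a kernel, finite generation via Schreier, inheritance of the $2$-group property and of orientation-preservation) is routine. The substance of the section lies in what comes after this lemma, namely analyzing the structure of $G_0$ via Theorem B and then reassembling $G$ from $G_0$ and a chosen element swapping $z$ and $z'$.
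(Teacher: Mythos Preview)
Your proof is correct and follows essentially the same route as the paper: the paper also notes that $G_0$ is normal, observes that any two elements of $G\setminus G_0$ multiply into $G_0$ (equivalently, your homomorphism $\Phi$ to $\Z/2\Z$ is surjective), and then invokes Schreier's lemma to obtain finite generation. The only cosmetic difference is that you package normality and index via the kernel of the action on $\{z,z'\}$, whereas the paper states these facts directly.
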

\begin{proof}
  It is easy to check that $G_0$ is normal in $G$. Moreover, notice that if $\sigma$ and $\sigma'$ are in $G\setminus G_0$, then $\sigma\sigma'$ is in $G_0$. Hence, $G_0$ has index $2$ in $G$. Moreover, Schreier's lemma states that any finite-index subgroup in a finitely-generated group is finitely generated. Hence, as $G$ is a finitely-generated $2$-group, we deduce that $G_0$ is a finitely-generated $2$-group contained in $\homeog{+}{}{\s^2;z}$.
\end{proof}

\begin{lema}
  Every $g\in G\setminus G_0$ has order $2$.
\end{lema}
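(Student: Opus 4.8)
The plan is to show that any $g\in G\setminus G_0$ necessarily exchanges the two points $z$ and $z'$, and then to invoke Ker\'ej\'art\'o's structure theorem (Proposition \ref{kerejartolemma}) to see that such an exchange is possible only for an involution.

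First I would pin down the action of $g$ on $\{z,z'\}$. The orbit $\mathcal{O}_G(z)=\{z,z'\}$ is $G$-invariant, hence invariant under $\langle g\rangle$, so $g$ permutes $\{z,z'\}$. If $g$ fixed both points it would lie in $G_0$, contrary to hypothesis; therefore $g(z)=z'$ and $g(z')=z$. In particular $g\neq Id$ and $g^{2}$ fixes both $z$ and $z'$.

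Next, since $G$ is a $2$-group, $g$ has finite order, so $g$ is a nontrivial, periodic, orientation-preserving homeomorphism of $\s^2$, and Proposition \ref{kerejartolemma} applies: $g$ has exactly two fixed points, every non-fixed point of $g$ is periodic, and all non-fixed periodic points share a common period $n$. Now $z$ is not fixed by $g$ (because $g(z)=z'\neq z$), while $g^{2}(z)=z$; hence the period of $z$ divides $2$ and is not $1$, so it equals $2$. Consequently $n=2$, which means $g^{2}$ fixes every non-fixed point of $g$; since $g^{2}$ also fixes the two fixed points of $g$, we get $g^{2}=Id$. As $g\neq Id$, this yields $\ord(g)=2$.

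I do not expect a genuine obstacle here: the argument is just a short combination of the $G$-orbit structure with Ker\'ej\'art\'o's theorem, the only point worth stating carefully being that a non-fixed point $z$ with $g^{2}(z)=z$ forces the common period $n$ of the non-fixed points to be $2$. An equivalent route, if one prefers, is to conjugate $g$ to a rotation $R$ via Proposition \ref{kerejartolemma}; then $R$ would exchange two distinct non-polar points, so $R^{2}$ would fix a non-polar point and hence $R^{2}=Id$, giving $g^{2}=Id$ again.
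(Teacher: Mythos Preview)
Your proof is correct and follows essentially the same route as the paper: show that $g$ swaps $z$ and $z'$, hence $g^2(z)=z$, and then invoke Proposition~\ref{kerejartolemma} to conclude that the (common) period of the non-fixed point $z$ forces $\ord(g)=2$. Your write-up is in fact more explicit than the paper's (which compresses the last step into a terse reference to Proposition~\ref{kerejartolemma}).
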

\begin{proof}
  If $g\in G\setminus G_0$, then $g(z)=z'$ and $g(z')=z$. We deduce that $g^2(z)=z$. As the local rotation number of $g$ is a singleton, we deduce that $g$ has order $2$ (see Proposition \ref{kerejartolemma}).
\end{proof}

\begin{proof}[\textbf{End of the proof of Theorem C}]
   By Theorem B, we know that $G_0$ is a finite cyclic group. If $G=G_0$, then $G$ is finite and cyclic. Otherwise, let $g_0$ in $G$ be a generator of $G_0$, and let $g\in G\setminus G_0$. Consider $\Gamma$ the subgroup de $G$ generated by $g_0$ and $g$. We claim that $\Gamma=G$. Indeed, if $g'$ is any element in $G\setminus G_0$, then $gg'\in G_0$, hence $g'\in \Gamma$. Moreover, as $gg_0$ and $g$ do not belong to $G_0$, we have that $gg_0$ and $g$ have order $2$ (by the previous lemma). Hence we have $gg_0 gg_0=Id=g^{2}$, which yields $gg_0g^{-1}=g_0^{-1}$. It follows that $G$ is a dihedral group.
\end{proof}

\section{Burnside problem for $2$-groups of homeomorphisms of the $2$-dimensional sphere}

In this section, we prove Theorem A, that is, every finitely-generated $2$-group $G$ of homeomorphisms of the $2$-dimensional sphere for which there is a uniform bound for the order of group elements is finite. Remind that a nontrivial $2$-group always contains involutions, that is, elements of order $2$. Let us denote $\inv(G):=\{g \in G\setminus {Id}: g^2=Id \}$,
and let $Z(\sigma)$ be the centralizer of $\sigma$ in $G$, that is, $Z(\sigma)=\{g\in G: g\sigma=\sigma g\}.$ In order to prove Theorem A, we start by proving, using Theorem C, that for every $\nu \in \inv(G)$, the set $Z(\nu)\cap \inv(G)$ is finite (along the proof of Theorem A, this is only part where we use the existence of a uniform bound for the order of group elements). Then we will prove that the set $\inv(G)$ is finite. Since each $g\in G\setminus\{Id\}$ has exactly two fixed points (by Proposition \ref{kerejartolemma}), we obtain that the union of fixed points of the involutions is also finite. Moreover, this set is non-empty, $G$-invariant, and has cardinality larger than or equal to $2$. In the case where the cardinality of $F$ is $2$, we deduce that $G$ is finite by Theorem C. For $\sigma \in G$, let us denote $\fix(\sigma)$ the set of all fixed points of $\sigma$.

\begin{prop}\label{prop involutions finite}
  Let $G$ be a finitely-generated $2$-group of orientation-preserving homeomorphisms of $\s^2$. Suppose that $G$ has uniformly bounded order. Then the following assertions hold:
  \begin{itemize}
    \item[(1)] If $\nu \in \inv(G)$, then the set $Z(\nu)\cap \inv(G)$ is finite.
    \item[(2)] The set $\inv(G)$ is finite.
  \end{itemize}
\end{prop}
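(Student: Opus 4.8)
The plan is to leverage Theorem C together with the structural facts about involutions established in the previous section. For part (1), fix $\nu \in \inv(G)$ and consider $H := Z(\nu) \cap \inv(G)$, the set of involutions commuting with $\nu$. For any $\sigma \in H$, the subgroup $\langle \nu, \sigma \rangle$ is generated by two commuting involutions, hence is either cyclic of order $2$ (if $\sigma = \nu$) or a Klein four-group. First I would argue that the fixed-point set $\fix(\nu)$, which is a pair $\{a,b\}$ by Proposition \ref{kerejartolemma}, is invariant under every $\sigma \in H$: since $\sigma$ commutes with $\nu$, it permutes $\fix(\nu)$. This splits $H$ into those $\sigma$ fixing $a$ and $b$ pointwise and those swapping them. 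Those fixing both $a$ and $b$ lie in $\homeog{+}{}{\s^2;a}$, which by Proposition \ref{propuniqueelementofordertwo} contains a \emph{unique} involution, so there is at most one such $\sigma$ beyond $\nu$ itself. For those swapping $a$ and $b$: such a $\sigma$ together with $\nu$ generates a group acting on $\s^2$ with the orbit $\{a,b\}$ of cardinality $2$; more to the point, any two such elements $\sigma, \sigma'$ have product $\sigma\sigma'$ fixing both $a$ and $b$, hence $\sigma\sigma' \in Z(\nu) \cap \homeog{+}{}{\s^2;a}$. The subgroup $K$ of $\langle H \rangle$ consisting of elements fixing $a$ and $b$ is a periodic subgroup of $\homeog{+}{}{\s^2;a}$, and I would like to show it is finite — this is where the uniform bound on orders is used, combined with an abelianness argument analogous to the proof of Theorem B applied inside $Z(\nu)$, or more directly: the group generated by $H$ has the orbit $\{a,b\}$, is a $2$-group, and (being contained in a group of uniformly bounded order) one applies Theorem C to $\langle H\rangle$ once we know it is finitely generated — but it is not a priori finitely generated, so instead I would bound $|H|$ directly by bounding the index-$2$ subgroup $K$ via the rotation-number injectivity (Lemma \ref{lemagruposabelianos1}) together with the uniform order bound, giving $|K| \le N$ for the uniform bound $N$, hence $|H| \le 2N$.

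For part (2), the strategy is a connectedness/covering argument on the set $\inv(G)$ using part (1). I would define a graph (or equivalence relation) structure on $\inv(G)$: say $\sigma \sim \tau$ if $\sigma\tau = \tau\sigma$, i.e., $\tau \in Z(\sigma)$. By part (1) each involution has only finitely many "neighbors." The key claim is that $\inv(G)$ is connected under the transitive closure of $\sim$, or at least that it is a single such cluster, so that finiteness of each neighborhood plus some finiteness of the "diameter" forces $\inv(G)$ finite. To get the diameter bound, I would use the fact that $G$ is finitely generated, say by $S = \{s_1,\dots,s_k\}$: any involution is a word in the $s_i$, and since $G$ is a $2$-group of uniformly bounded order $N$, the group has bounded exponent, which should let one show (perhaps via a Burnside-type compactness, or via the fact that the number of involutions reachable from a fixed one by conjugation by a single generator is finite) that $\inv(G)$ lies in the union over the finitely many generators of finitely many translates of $Z$-neighborhoods. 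Concretely: $\inv(G) = \bigcup_{i} s_i (\inv(G) \cap Z(s_i^{?}))\, s_i^{-1} \cup \dots$ — conjugation by generators moves involutions among themselves, and one shows the orbit of a single involution under conjugation by the generating set, iterated, stays within a controlled finite set by pairing each conjugation step with membership in some $Z(\nu)\cap \inv(G)$.

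The main obstacle will be part (2): turning the local finiteness from part (1) into global finiteness of $\inv(G)$ without a priori knowing $\langle \inv(G)\rangle$ is finitely generated. The cleanest route I anticipate is: let $\Sigma_0 = \{\nu_0\}$ for a fixed involution, and inductively $\Sigma_{n+1} = \Sigma_n \cup \bigcup_{\sigma \in \Sigma_n}\big(\bigcup_i s_i \sigma s_i^{-1}\big)$; show each $\Sigma_n$ is finite (clear) and that $\bigcup_n \Sigma_n$ is \emph{all} of $\inv(G)$ because any involution $\tau = w \nu_0 w^{-1}$ is reached by writing $w$ as a word in generators and inserting $\nu_0$ — but this requires every involution to be conjugate to $\nu_0$, which is false in general. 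So instead the obstacle is organizing the bookkeeping so that part (1) is applied along a chain: given $\tau \in \inv(G)$, one must produce a bounded-length chain $\nu_0, \nu_1, \dots, \nu_m = \tau$ with consecutive ones commuting. I expect the resolution uses that $\fix(\nu)$ for $\nu \in \inv(G)$ together with the $G$-action gives a finite invariant set $F$ once we know $\inv(G)$ is finite — so part (2) may instead be proved by a clever finiteness argument on the fixed-point sets directly, showing the union $F = \bigcup_{\nu \in \inv(G)} \fix(\nu)$ is finite by a self-bootstrapping argument, from which $\inv(G)$ finite follows since each involution is determined on the sphere minus $F$ and there are finitely many possibilities given the constraints; pinning down exactly this bootstrapping step is the crux.
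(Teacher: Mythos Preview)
Your plan for part (1) works, though by a slightly different route than the paper. The paper instead supposes there is an infinite sequence $\nu_1, \nu_2, \ldots$ in $Z(\nu) \cap \inv(G)$, forms the ascending chain $G_n = \langle \nu, \nu_1, \ldots, \nu_n \rangle$, observes that each $G_n$ is a finitely-generated $2$-group preserving the two-point set $\fix(\nu)$, applies Theorem~C (or Theorem~B) to conclude that $G_n$ is cyclic or dihedral, and then uses the uniform order bound to force the chain to stabilize. Your direct bound $|K| \le N$ via the injectivity of the local rotation map (after noting that $K$ is abelian by the Theorem~B machinery, which nowhere requires finite generation) is a valid and arguably more quantitative alternative.

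Part (2), however, has a genuine gap. You correctly identify that the goal is to connect any two involutions by a short chain of commuting involutions, but none of the constructions you sketch (conjugation by generators, bootstrapping on fixed-point sets) closes, as you yourself note. The missing idea is purely group-theoretic and has nothing to do with homeomorphisms: \emph{in any periodic group, two involutions $\sigma_1$, $\sigma_n$ generate a finite dihedral (or cyclic) group}, because $\langle \sigma_1, \sigma_n \rangle$ is a quotient of the infinite dihedral group $\langle a, b \mid a^2 = b^2 = 1 \rangle$ by the relation $(\sigma_1\sigma_n)^m = 1$, where $m = \ord(\sigma_1\sigma_n)$. Since $G$ is a $2$-group, $m$ is a power of $2$; if $m \ge 2$ the element $\nu_n := (\sigma_1\sigma_n)^{m/2}$ is the central involution of this dihedral group, and if $m=1$ one takes $\nu_n := \sigma_1$. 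In either case $\nu_n$ commutes with both $\sigma_1$ and $\sigma_n$, so every involution is joined to $\sigma_1$ by a commuting chain of length~$2$. Now $\nu_n \in Z(\sigma_1) \cap \inv(G)$, which is finite by part (1); hence if there were infinitely many $\sigma_n$, after passing to a subsequence all the $\nu_n$ would equal a single $\nu$, placing infinitely many $\sigma_n$ in $Z(\nu) \cap \inv(G)$, again finite by part (1). This pigeonhole-on-the-middle-of-the-chain step is exactly what your plan lacks.
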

  \begin{proof}
Let us prove (1).  Suppose that there is an infinite sequence $\nu, \nu_1,\ldots, \nu_n,\ldots$ contained in $Z(\nu)\cap \inv(G)$. Fix an interger $n\geq 1$. The group $G_n$ generated by $\nu, \nu_1,\ldots, \nu_n$ is finitely generated, periodic, and fixes the set of fixed points of $\nu$ (because each $\nu_i$ commutes with $\nu$). Then, by Theorem C, the group $G_n$ is finite and either cyclic or dihedral. Moreover, $$ \{Id\} \subset G_0 \subset \ldots \subset G_n \ldots  .$$ Since we are assuming that elements in $G$ have  uniformly bounded order, this sequence must stabilize at some integer $n_0$. That is, for every integer $n\geq n_0$, one has $G_n=G_{n_0}$. This proves that $Z(\nu)\cap \inv(G)$ is finite.\\

Let us prove (2). Suppose that there exists an infinite sequence of involutions $\sigma_1, \sigma_2,\ldots, \sigma_n,\ldots$ contained in $G$. For every integer $n$, the group generated by $\sigma_1$ and $\sigma_n$ is either cyclic or dihedral. Therefore, it contains an involution $\nu_n$ that commutes with $\sigma_1$ and $\sigma_n$ ($\nu_n=\sigma_1$ in the cyclic case, and $\nu_n=(\sigma_1\sigma_n)^{\ord(\sigma_1\sigma_n) /2}$ in the dihedral case). Since $Z(\sigma_1)\cap \inv(G)$ is finite (by item (1)), we can suppose (eventually passing to an subsequence of $(\sigma_n)_{n\in\N}$) that $\nu_n=\nu$ for every integer $n$. This implies that $\nu$ commutes with all $\sigma_n$, and hence the sequence $\{\sigma_n\}_{n\in \N}$ is contained in $Z(\nu)\cap \inv(G)$. But this last set is finite by item (1). This contradiction proves that the set $\inv(G)$ is finite.
\end{proof}

\begin{proof}[\textbf{End of the proof of Theorem A}]
Assume that $G$ is nontrivial. Applying Proposition \ref{prop involutions finite}, we obtain that the set $\inv(G)$ is finite. Since each $g\in G\setminus\{Id\}$ has exactly two fixed points (by Proposition \ref{kerejartolemma}), we obtain that the set $$F:= \bigcup_{\sigma\in \inv(G)  } \fix(\sigma) $$ is also finite. As $\fix(g\sigma g^{-1})=g(\fix(\sigma))$ and $g\sigma g^{-1}$ is an involution, the set $F$ is non-empty, $G$-invariant, and has cardinality larger than or equal to $2$. In the case where the cardinality of $F$ is $2$, we deduce that $G$ is finite by Theorem C. The case where $F$ contains more than $2$ points is settled by the next result.
\end{proof}

\begin{lema}[Corollary 2 from \cite{gl2}]
  Let $G$ be a finitely-generated periodic group of orientation-preserving homeomorphisms of $\s^2$. Suppose that $G$ has a finite orbit of cardinality at least $3$. Then $G$ is finite.
\end{lema}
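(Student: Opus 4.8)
The plan is to combine the permutation action of $G$ on the finite orbit with Kér\'ek\-j\'art\'o's classification of periodic homeomorphisms (Proposition~\ref{kerejartolemma}). Write $F$ for a finite $G$-orbit with $m:=\#F\geq 3$. Since $F$ is $G$-invariant, restricting the action to $F$ yields a group homomorphism $\Phi\colon G\to \mathrm{Sym}(F)$ into the (finite) symmetric group on $F$. Let $K:=\ker\Phi$ be the subgroup of those elements of $G$ that fix every point of $F$.

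Next I would show that $K$ is trivial. Let $g\in K$ and suppose $g\neq Id$. By hypothesis $g$ is an orientation-preserving homeomorphism of $\s^2$ of finite order, so Proposition~\ref{kerejartolemma} applies: $g$ is conjugate to a nontrivial rotation and therefore has \emph{exactly} two fixed points. But $F\subseteq \fix(g)$ and $\#F\geq 3$, a contradiction. Hence $K=\{Id\}$.

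Finally, $\Phi$ is injective, so $G\cong \Phi(G)$ is a subgroup of $\mathrm{Sym}(F)$, which is finite; thus $G$ is finite. I do not expect any serious obstacle here: the entire difficulty is packaged into Proposition~\ref{kerejartolemma}, which has already been recorded. The only point to be careful about is that the bound $\#F\geq 3$ is precisely what forces $K$ to be trivial (for $\#F=2$ the argument breaks down, which is exactly why Theorems~C and~E must be treated separately); note also that, for this particular statement, neither finite generation nor a uniform bound on the orders of group elements is actually used.
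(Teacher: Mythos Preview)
Your argument is correct. The paper, however, does not actually prove this lemma: it is quoted verbatim as ``Corollary~2 from \cite{gl2}'' (Guelman--Liousse) and invoked as a black box to finish the proof of Theorem~A. So there is no ``paper's own proof'' to compare against.

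That said, your proof is exactly the natural one and is essentially what underlies the cited result: the action on the finite orbit $F$ gives a homomorphism $\Phi\colon G\to\mathrm{Sym}(F)$, and any nontrivial element of the kernel would be a finite-order, orientation-preserving homeomorphism fixing at least three points, contradicting Proposition~\ref{kerejartolemma}. Your remark that neither finite generation nor a uniform order bound is used here is also correct. So your write-up is a strict improvement in self-containedness over what the paper does at this point.
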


\section{Burnside problem for area-preserving homeomorphisms of the $2$-dimensional sphere}

In this section, we prove Theorem D, that is, every finitely-generated, periodic group of area-preserving homeomorphisms of the $2$-dimensional sphere having uniform
order is finite. As in the case of a $2$-group, we start by proving Theorem $E$ (which is the analog of Theorem C in the area-preserving setting). Then using Theorem E we deduce
that Proposition \ref{prop involutions finite} holds in the area-preserving case (in the case where the set $\inv(G)$ is nonempty). We then finish the proof of Theorem D as that of
Theorem A. In order to prove Theorem E, we first introduce the rotation set for a homeomorphism of the open annulus.

\subsection{Rotation set for a homeomorphism of the open annulus}

We let $\A=\T{1}\times \R$ be the open annulus and $\widetilde{\A}:=\R\times \R$ its universal covering. We denote
$\fonc{\widetilde{\pi}}{\widetilde{\A}}{\A}$ the corresponding universal covering map, and $\fonc{p_1}{\R\times \R}{\R}$ the projection on the first coordinate.
By the two-point compactification, one can identify $\A$ to the punctured sphere $\s^2\setminus \{N,S\}$, where $N$ and $S$ are two distinct points of $\s^2$
(the north and south poles). The Lebesgue measure on $\s^2$ induces a probability measure on $\A$, that we still call the Lebesgue measure and denote by $\leb$.

Let $\overline{g}$ be a homeomorphism of $\A$ that is isotopic to the identity, and let $\widetilde{\overline{g}}$ be a lift of $\overline{g}$ to $\widetilde{\A}$. Following \cite{lecalvez}, we say that the {\em rotation number} of a $\overline{g}$-recurrent point $x\in \A$ under $\widetilde{\overline{g}}$ is well-defined and equal to $\rho(\widetilde{\overline{g}},x)\in \R\cup \{-\infty\}\cup \{+\infty\}$ if for every sequence of integers $(n_k)_{k\in \N}$ which goes to $+\infty$ such that $ (\overline{g}^{n_k}(x))_{k\in \N}$ converges to $x$,  the sequence $(\rho_{n_k}(\widetilde{\overline{g}},x))_{k\in \N}$ defined as
$$ \rho_{n_k}(\widetilde{\overline{g}},x):=\frac{1}{n_k}(p_1(\widetilde{\overline{g}}^{n_k}(\widetilde{x}))-p_1(\widetilde{x})),$$
where $\widetilde{x}$ is a point in $\widetilde{\pi}^{-1}(x)$, converges to  $\rho(\widetilde{\overline{g}},x)$. Again, this definition does not depend on the choice of $\widetilde{x}\in \widetilde{\pi}^{-1}(x)$.

Assume that $\overline{g}$ preserves a probability measure $\mu$ on $\A$. We say that the {\em rotation number} of $\tilde{\overline{g}}$ (with respect to $\mu$)
is well defined and equal to $\rho(\widetilde{\overline{g}},\mu)$ if
\begin{enumerate}
  \item $\mu$-almost every point $x\in \A$ has a rotation number $\rho(\widetilde{\overline{g}},x)$,
  \item the function $x \mapsto \rho(\widetilde{\overline{g}},x)$ is $\mu$-integrable, and
\end{enumerate}
$$ \rho(\widetilde{\overline{g}},\mu):= \int_{\A} \rho(\widetilde{\overline{g}},x)\,d\mu.$$
Notice that, by Birkhoff Ergodic Theorem, we have
$$ \rho(\widetilde{\overline{g}},\mu):= \int_{\A} \rho_1(\widetilde{\overline{g}},x)\,d\mu,$$
where $\rho_1(\widetilde{\overline{g}},x)=  p_1(\widetilde{\overline{g}}(\widetilde{x}))-p_1(\widetilde{x})$, with $\widetilde{x}\in \widetilde{\pi}^{-1}(x)$.

\subsection{Rotation set for periodic homeomorphisms of $\s^2$}

In our setting, let $g$ be a periodic, orientation-preserving homeomorphism $\s^2$ that preserves the Lebesgue measure. We know that if $g$ is nontrivial, then
it fixes two distinct points $N$ and $S$ of $\s^2$. As in the local case, we can associate  to our periodic homeomorphism $g$ a unique ``rotation number'' on the open annulus $\A_{N,S}:=\s^2\setminus \{N,S\}$  defined as 
$$ \rho_{\A_{N,S}}(g):= \int_{\A_{N,S}} \rho_1(g,x)\,d\leb \in \T{1}.$$
Notice that if $\rho_{\A_{N,S}}(g)=0$, then $g$ is the identity.

Given $N$ and $S$ two distinct points of $\s^2$, we will denote $\homeog{}{0}{\A_{N,S}}$ the group of all homeomorphisms of $\s^2$ that preserve the orientation and fix both $N$ and $S$. As in the local case we have the following result.

\begin{prop}\label{rotation map homomorphism}
  Let $G_0$ be a periodic subgroup of $\homeog{}{0}{\A_{N,S}}$. The rotation number map defined on $G_0$ is a group homomorphism into $\T{1}$ if and only if $G_0$ is abelian.
\end{prop}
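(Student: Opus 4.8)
The plan is to repeat, essentially verbatim, the proof of Proposition~\ref{local rotation map homomorphism}, with the rotation number $\rho_{\A_{N,S}}$ playing the role of the local rotation number $\lrho$.

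First I would treat the ``only if'' implication. Suppose $\rho_{\A_{N,S}}$ restricted to $G_0$ is a group homomorphism into $\T{1}$. For any $f,g\in G_0$ the commutator $[f,g]=fgf^{-1}g^{-1}$ again lies in $G_0$, hence is periodic, and $\rho_{\A_{N,S}}([f,g])$ is null (because $\rho_{\A_{N,S}}$ is a homomorphism into the abelian group $\T{1}$). Since a periodic, orientation-preserving homeomorphism of $\s^2$ fixing $N$ and $S$ whose rotation number on $\A_{N,S}$ vanishes must be the identity (the remark following the definition of $\rho_{\A_{N,S}}$), we conclude $[f,g]=Id$. As $f$ and $g$ are arbitrary, $G_0$ is abelian.

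For the converse, assume $G_0$ is abelian and fix $f,g\in G_0$ with $q'=\ord(f)$ and $q=\ord(g)$. I would choose lifts $\widetilde{f},\widetilde{g}$ of $f,g$ to $\widetilde{\A}=\R\times\R$ and write $\tau_m$ for the deck transformation $\widetilde{x}\mapsto\widetilde{x}+(m,0)$. As recalled in the previous subsection, there are integers $p',p$ with $\widetilde{f}^{\,q'}=\tau_{p'}$ and $\widetilde{g}^{\,q}=\tau_{p}$, and $\rho_{\A_{N,S}}(f)=p'/q'$, $\rho_{\A_{N,S}}(g)=p/q$ in $\T{1}$. The one substantive step is to check that $\widetilde{f}$ and $\widetilde{g}$ commute. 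Since $f$ is orientation-preserving and fixes each of $N$ and $S$, it acts trivially on $\pi_1(\A_{N,S})\cong\Z$, so $\widetilde{f}$ commutes with every $\tau_m$; likewise for $\widetilde{g}$. As $fg=gf$, the commutator $[\widetilde{f},\widetilde{g}]$ is a deck transformation, say $[\widetilde{f},\widetilde{g}]=\tau_k$. Then $\widetilde{f}\widetilde{g}\widetilde{f}^{-1}=\tau_k\widetilde{g}$, and hence $\widetilde{f}\widetilde{g}^{\,q}\widetilde{f}^{-1}=(\tau_k\widetilde{g})^{q}=\tau_{kq}\widetilde{g}^{\,q}=\tau_{kq}\tau_{p}=\tau_{kq+p}$; on the other hand $\widetilde{f}\widetilde{g}^{\,q}\widetilde{f}^{-1}=\widetilde{f}\tau_{p}\widetilde{f}^{-1}=\tau_{p}$. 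Comparing the two gives $kq=0$, so $k=0$ and $\widetilde{f}\widetilde{g}=\widetilde{g}\widetilde{f}$. (Periodicity of $g$ is exactly what is used here.)

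Once the lifts commute, I would finish with the computation of Proposition~\ref{local rotation map homomorphism}: $(\widetilde{f}\widetilde{g})^{q'q}(\widetilde{x})=\widetilde{f}^{\,q'q}(\widetilde{g}^{\,q'q}(\widetilde{x}))=\widetilde{g}^{\,q'q}(\widetilde{x})+(qp',0)=\widetilde{x}+(q'p,0)+(qp',0)$, so the lift $\widetilde{f}\widetilde{g}$ of the periodic element $fg\in G_0$ satisfies $(\widetilde{f}\widetilde{g})^{q'q}=\tau_{q'p+qp'}$. Hence, by the identification recalled above, in $\T{1}$ one has
$$ \rho_{\A_{N,S}}(fg)=\frac{q'p+qp'}{q'q}=\frac{p}{q}+\frac{p'}{q'}=\rho_{\A_{N,S}}(g)+\rho_{\A_{N,S}}(f), $$
so $\rho_{\A_{N,S}}$ is a group homomorphism. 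I expect the main (indeed, only) obstacle to be the commuting-lifts claim above; everything else is formal, the single additional ingredient being that the rotation number on $\A_{N,S}$ of a periodic homeomorphism equals the ratio $p/q$ read off from a lift, which is already established in the previous subsection.
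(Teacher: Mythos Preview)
Your proposal is correct and follows exactly the route the paper intends: Proposition~\ref{rotation map homomorphism} is stated without proof as the direct analog of Proposition~\ref{local rotation map homomorphism}, and you reproduce that argument with $\rho_{\A_{N,S}}$ in place of $\lrho$. The one point where you go beyond the paper is the verification that the lifts $\widetilde{f},\widetilde{g}$ actually commute (the paper's proof of Proposition~\ref{local rotation map homomorphism} simply asserts ``we can consider commuting lifts''); your argument via $[\widetilde f,\widetilde g]=\tau_k$ and $kq=0$ is a clean and correct justification of this step, and it makes explicit that periodicity is exactly what is being used.
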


\subsection{Proof of Theorems D and E}

We start by proving Theorem E.

\begin{proof}[\textbf{Proof of Theorem E}]
   Let $G$ be a finitely-generated periodic group of area-preserving homeomorphisms of the $2$-dimensional sphere. Let $z$ be a point with $G$-orbit of cardinality $2$. We write $\mathcal{O}_G(z)=\{z,z'\}$. We consider the subgroup $G_0$ of the homeomorphisms that fix both $z$ and $z'$. By Lemma \ref{lema1 proof theorem c}, the group $G_0$ is an index-2, normal subgroup of $G$. In particular $G_0$ is a finitely-generated periodic group contained in $\homeog{}{0}{\A_{z,z'}}$ all of whose elements preserve the Lebesgue measure. Since
   the  rotation number is a group homomorphism in the are-preserving case (see Lemma \ref{obvio} below), we can invoke to an analog of
Proposition \ref{rotation map homomorphism} to conclude that $G_0$ is Abelian.

\begin{lema}\label{obvio}
Let $G_0$ be subgroup of $\homeog{}{0}{\A_{z,z'}}$. Suppose that each element of $G_0$ preserves the Lebesgue measure. Then the rotation map is a group homomorphism.
\end{lema}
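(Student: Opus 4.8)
The plan is a direct cocycle computation. Because the annulus $\A_{z,z'}$ carries the $G_0$-invariant probability measure $\leb$, one can integrate the displacement functions over the whole annulus and therefore avoid the commuting-lifts device used in Proposition \ref{local rotation map homomorphism}; the computation then goes through for arbitrary $f,g\in G_0$, whether or not they commute. Recall that each $g\in G_0$ is an orientation-preserving homeomorphism of $\s^2$ fixing $z$ and $z'$, hence restricts to a homeomorphism of $\A_{z,z'}$ isotopic to the identity; since it preserves $\leb$, Poincar\'e recurrence guarantees that $\leb$-almost every point of $\A_{z,z'}$ is recurrent (and, when $g$ is periodic, every point is), so by the discussion preceding the statement $\rho_{\A_{z,z'}}(g)=\int_{\A_{z,z'}}\rho_1(\widetilde g,x)\,d\leb \pmod 1$ is well defined and does not depend on the chosen lift $\widetilde g$ of $g$ to $\widetilde{\A}$.

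The one preliminary remark I would record is that, for a fixed lift $\widetilde g$, the displacement $x\mapsto \rho_1(\widetilde g,x)=p_1(\widetilde g(\widetilde x))-p_1(\widetilde x)$ is independent of the choice of $\widetilde x\in\widetilde{\pi}^{-1}(x)$, since replacing $\widetilde x$ by $\widetilde x+(k,0)$ adds the same integer $k$ to $p_1(\widetilde g(\widetilde x))$ and to $p_1(\widetilde x)$. Hence $\rho_1(\widetilde g,\cdot)$ is a genuine function on $\A_{z,z'}$, which may be evaluated at any point, recurrent or not. Now fix $f,g\in G_0$ together with lifts $\widetilde f,\widetilde g$, so that $\widetilde f\widetilde g$ is a lift of $fg$; since $\widetilde g(\widetilde x)$ is a lift of $g(x)$, the remark yields
\begin{align*}
  \rho_1(\widetilde f\widetilde g,x) &= p_1\big(\widetilde f(\widetilde g(\widetilde x))\big)-p_1(\widetilde x)\\
  &= \Big[p_1\big(\widetilde f(\widetilde g(\widetilde x))\big)-p_1\big(\widetilde g(\widetilde x)\big)\Big]+\Big[p_1\big(\widetilde g(\widetilde x)\big)-p_1(\widetilde x)\Big]\\
  &= \rho_1(\widetilde f,g(x))+\rho_1(\widetilde g,x).
\end{align*}

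Integrating this identity over $\A_{z,z'}$ against $\leb$ and using that $g$ preserves $\leb$ — so that $\int_{\A_{z,z'}}\rho_1(\widetilde f,g(x))\,d\leb=\int_{\A_{z,z'}}\rho_1(\widetilde f,y)\,d\leb$ by the change of variables $y=g(x)$ — gives
$$\int_{\A_{z,z'}}\rho_1(\widetilde f\widetilde g,x)\,d\leb=\int_{\A_{z,z'}}\rho_1(\widetilde f,y)\,d\leb+\int_{\A_{z,z'}}\rho_1(\widetilde g,x)\,d\leb .$$
Reducing mod $1$, which also removes the dependence on the choice of lifts, this reads $\rho_{\A_{z,z'}}(fg)=\rho_{\A_{z,z'}}(f)+\rho_{\A_{z,z'}}(g)$, so the rotation map is a homomorphism into $\T{1}$. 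The computation is routine; the only point that needs care is the measure-theoretic content already built into the definition of $\rho_{\A_{z,z'}}$, namely the $\leb$-integrability of the displacement function and the appeal to Birkhoff's ergodic theorem identifying $\int\rho_1(\widetilde g,x)\,d\leb$ with the integral of the pointwise rotation numbers. This, together with the change of variables above, is exactly where area preservation is used, and it is precisely the hypothesis that is not available for a general element of $\homeog{}{0}{\A_{z,z'}}$.
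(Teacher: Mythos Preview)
Your proof is correct and follows essentially the same route as the paper: decompose $\rho_1(\widetilde f\widetilde g,x)$ as $\rho_1(\widetilde f,g(x))+\rho_1(\widetilde g,x)$, integrate against $\leb$, and use the $g$-invariance of $\leb$ to change variables. The only difference is that you spell out in more detail why the displacement function is well defined on the annulus and why the choice of lifts is immaterial modulo $1$; the paper's version is the bare four-line computation.
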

\begin{proof}
Let $f$ and $g$ be two elements of $G_0$. We have that
\begin{align*}
  \rho_{\A_{z,z'}}(fg) & = \int_{\A_{z,z'}} \rho_1(fg,x)\,d\leb (x)   \\
   & = \int_{\A_{z,z'}} \rho_1(f,g(x))\,d\leb(x) + \int_{\A_{z,z'}} \rho_1(g,x)\,d\leb (x) \\
   & = \int_{\A_{z,z'}} \rho_1(f,y)\,d\leb(y) + \int_{\A_{z,z'}} \rho_1(g,x)\,d\leb (x) \\
   & =\rho_{\A_{z,z'}}(f)+ \rho_{\A_{z,z'}}(g).
\end{align*}
This shows that $\rho_{\A_{z,z'}}$ is a group homomorphism.
\end{proof}

Since $G_0$ is finitely generated, periodic, and Abelian, we deduce that it is finite. Moreover by an analog of Lemma \ref{lemagruposabelianos2}
(using the rotation number instead the local rotation set), we deduce that $G_0$ is cyclic. The proof finishes as the proof of Theorem C.
\end{proof}

Now we can prove Theorem D.

\begin{proof}[\textbf{Proof of Theorem D}]
The proof is a straightforward adaptation of the proof of Theorem A. Let $G$ be a finitely-generated periodic group of orientation-preserving homeomorphisms of $\s^2$. Suppose that each element of $G$ preserves the Lebesgue measure, that $G$ has at least an element of even order, and that $G$ has uniformly bounded order. Let us denote $\inv(G):=\{g \in G\setminus {Id}: g^2=Id \}$. Notice that $G$ always contains involutions. Indeed, if $g^{2p}=Id$ for some integer $p$, then $g^p \in \inv(G)$. Applying Proposition \ref{prop involutions finite} (using Theorem E instead of Theorem C), we obtain that the set $\inv(G)$ is finite. The proof follows as the proof of Theorem A.
\end{proof}

\bibliographystyle{alpha}
\bibliography{bibliografie2}

\noindent Jonathan Conejeros\\
Departamento de Matem\'atica y Ciencia de la Computaci\'on\\
Universidad de Santiago de Chile\\
Avenida Libertador Bernardo O''Higgins n°3363,\\
Estaci\'on Central, Santiago, Chile.\\
e-mail: jonathan.conejeros@usach.cl

\end{document}